\newcommand{\comment}[1]{\ifthenelse{\boolean{dum}}{
{\par\noindent\Huge\ding{46}} \fbox{\parbox{10cm}{#1}}\par}{}}
\newcommand{\clabel}[1]{\comment{label #1}}
\newtheorem{lemma}[subsection]{Lemma}
\newtheorem{thm}[subsection]{Theorem}
\newtheorem{prop}[subsection]{Proposition}
\newtheorem{cor}[subsection]{Corollary}
\numberwithin{equation}{section}
\DeclareMathOperator{\Real}{Re}
\begin{document}
\title{A finitization of the bead process}
\author{Benjamin J. Fleming}
\address{Department of Mathematics and Statistics \\
The University of Melbourne 
Victoria 3010, Australia}
\email{B.Fleming2@pgrad.unimelb.edu.au}

\author{Peter J. Forrester}
\address{Department of Mathematics and Statistics \\
The University of Melbourne 
Victoria 3010, Australia}
\email{P.Forrester@ms.unimelb.edu.au}

\author{Eric Nordenstam}
\address{Institutionen f\"or Matematik, 
Swedish Royal Institute of Technology (KTH), 100 44 Stockholm, Sweden}
\email{eno@math.kth.se}

\begin{abstract}
The bead process is the particle system defined on parallel lines, with underlying measure giving
constant weight to all configurations in which particles on neighbouring lines interlace,
and zero weight otherwise. Motivated by the statistical mechanical model of the tiling of
an $abc$-hexagon by three species of rhombi, a finitized version of the bead process is defined.
The corresponding joint distribution can be realized as an eigenvalue probability density function
for a sequence of random matrices.
The finitized bead process is determinantal, and we give the correlation kernel
in terms of Jacobi polynomials. Two scaling limits are considered: a global limit in which the
spacing between lines goes to zero, and a certain bulk scaling limit.
In the global limit the shape of the
support of the particles is determined, while in the bulk scaling limit the bead process kernel
of Boutillier is reclaimed, after approriate identification of the anisotropy parameter therein.
\end{abstract}

\maketitle

\section{Introduction}
Consider a particle system confined to equally spaced lines that are parallel to
the $y$-axis and pass through the points $x=k$ for $k \in \mathbb Z^+$, and let this
latter integer label the lines. For each line $k$ place point particles uniformly
at random, with the constraint that their positions interlace with the positions
of the particles on lines $k \pm 1$. It has been shown by Boutillier \cite{Bo06}
that such uniformly distributed interlaced configurations --- referred to as a bead
process --- are naturally generated within dimer models. Furthermore, it is shown
in  \cite{Bo06} that the bead process is an example of a determinantal point
process: the $k$-point correlation function is equal to a $k \times k$ determinant
with entries independent of $k$. If all the particles are on the same line, this
correlation function is precisely that for the eigenvalues in the bulk of a random
matrix ensemble with unitary symmetry, for example the Gaussian unitary ensemble of
complex Hermitian matrices (GUE).

If some of the particles are on different lines, and the 
anisotropy parameter (see below for this notion) is
zero, the correlations between particles on different
lines coincide with the correlation between bulk eigenvalues in the 
GUE minor process \cite{FN08}. The latter is the multi-species particle
system formed by the eigenvalues of a GUE matrix and its successive minors.
Let $M_{n}$ denote an $n \times n$ GUE matrix, and form the principal minors
$M_t$, $t=1,\dots,n-1$
as the  $t \times t$ GUE matrices corresponding to the top $t \times t$ 
block of $M_{n}$. Let the eigenvalues of $M_t$ be denoted
by $\{x_j^{(t)}\}_{j=1,\dots,t}$ with $x_t^{(t)} < \cdots < x_1^{(t)}$.
By a theorem of linear algebra, the successive minors $M_{t+1}$, $M_t$ have
 the fundamental property that
their eigenvalues interlace, 
\begin{equation}\label{II}
x_{t+1}^{(t+1)} < x_t^{(t)} < x_t^{(t+1)} < \cdots < x_2^{(t+1)} < x_1^{(t)} < x_1^{(t+1)}
\end{equation}
for $t=1, \dots, n-1$. It turns out that for
$n \times n$ GUE matrices \cite{Ba01a}, the joint distribution of all 
the $\frac{1}{2}n(n+1)$ eigenvalues $\cup_{s=1}^{n} \{ x_j^{(s)} \}_{j=1,\dots,s}$ 
of all successive minors is proportional to
\begin{equation}\label{1.2}
\prod_{s=1}^{n-1} \chi(x^{(s)},x^{(s+1)} ) \prod_{j=1}^{n} e^{- (x_j^{(n)} )^2}
\prod_{1 \le j < k \le n } (x_j^{(n)} - x_k^{(n)} )
\end{equation}
where $\chi(x^{(s)}, x^{(s+1)} )$ denotes the interlacing (\ref{II}) between
neighbouring species. Regarding species $(s)$ as occuring on line $s$, one
sees that the eigenvalues to the left of line $n$ all occur uniformly within interlaced
regions, precisely as required by the bead process.

The main objective of this paper is to construct and analyze a finitization of the
bead process. Like in \cite{Bo06}, we are motivated by a statistical mechanical
model --- in our case the tiling of an $abc$-hexagon by three species of rhombi.
However unlike in  \cite{Bo06} this statistical mechanical model plays no
essential role in the ensuing analysis. We will see in Section 2 that the rhombi tiling
of an $abc$-hexagon leads to a particle system defined on the segments $0 < y < 1$ of the lines at
$x=1,2,\dots,p+q-1$ $(p \le q)$ in the $xy$-plane. The number of particles, $r(t)$ say, 
on line $x=t$ (to be referred henceforth as line $t$) is
given by
\begin{equation}\label{3.1}
r(t) = \left \{
\begin{array}{ll} t, & t \le p \\
p, & p \le t \leq q \\
p+q-t, & q \le t.
\end{array} \right.
\end{equation}
Denote the coordinates on line $t$ by $x^{(t)} := \{ x_j^{(t)} \}_{j=1,\dots, r(t)}$, and require that
the particles be ordered $x_{r(t)} < \cdots < x_1$. With $\chi(x^{(s)}, x^{(s+1)})$ denoting the interlacing
(\ref{II}) between particles on neighbouring lines the particle system---which is our finitization of the
bead process---is specified by the joint probability density function (PDF) on the line segments
proportional to
\begin{equation}\label{3.2}
\prod_{t=1}^{p-1} \chi(x^{(t)}, x^{(t+1)})
\prod_{t=p}^{q-1} \chi(x^{(t)}, x^{(t+1)} \cup \{0\})
\prod_{t=q}^{p+q-1} \chi(x^{(t)}, x^{(t+1)} \cup \{0,1\}).
\end{equation}

The particles in the tiling problem are restricted to lattice sites. To obtain our
finitization of the bead process requires taking a continuum limit in which the
lengths of the vertical side of the $abc$-hexagon is taken to infinity. This is done in
Section 3. There we also compute the marginal distribution of (\ref{3.2}) on any one line,
and we use that knowledge, together with some random matrix theory, to compute the shape
formed by the support of the particles in the limit that the lines form a continuum in the
horizontal direction.

We know that the PDF (\ref{3.2}) can be realized as a joint eigenvalue PDF as well as
a distribution relating to both queues \cite{Ba01a} and tilings \cite{JN06}. Likewise,
in Section 4 we show that the PDF (\ref{3.2}), which can be obtained as the continuum limit
of a tiling problem, can also be realized as an eigenvalue PDF for a certain sequence of
random matrices.

Section 5 is devoted to the computation of the general $n$-point correlation function for
$n$ particles on arbitrary lines in our finitized bead process. The joint PDF (\ref{3.2}) is
a determinantal point process, meaning that the $n$-point correlation function can be
specified as an $n \times n$ determinant with entries independent of $n$. To compute the
entries, referred to as the correlation kernel, we use a method based on a formulation in
terms of the diagonalization of certain matrices, due to Borodin and collaborators
\cite{BFPS06}. In Section 6 we take up the task of computing the bulk scaling of the
correlation function. This involves us choosing the origin at the midpoint of a collection
lines spaced $O(1)$ apart, scaling the distances on the lines so that the mean density in the
neigbourhood of these points is unity, then taking the limit that the number of particles
goes to infinity.
At a technical level, this requires use of a certain asymptotic formula
for Jacobi polynomials, due to Chen and Ismail~\cite{CI91}.
Their formula contains small errors of detail in its presentation, which 
 we remedy in the Appendix.
 We reclaim the bead process correlation kernel obtained by
Boutillier \cite{Bo06}, after identification of the anisotropy parameter therein with a
parameter defined in terms of the continuum line number, and an anisotropy parameter
relating to our underlying $abc$-hexagon.

\section{Discrete model}

Construct a square grid from the lines $x = j$, $y =  k$, ($j,k \in \mathbb Z$) in the $xy$-plane.
On this grid construct continuous lattice paths which consist of straight line segments connecting the
lattice point $(x,y)$ to $(x+1, y \pm 1)$ for one of $\pm$. Such a lattice path, starting at $(x,y)$
and finishing at $(x+N,y+p)$ say, can be thought of as the space time trajectory of a random walker
confined to the $y$-axis. The random walker starts at $y$, and finishes at $y+p$ after $N$ steps,
each of which are one unit down, or one unit up. We are interested in $n$ such lattice paths,
conditioned never to intersect,
starting at $(0,2i)$ and ending at $(p+q, -p+q+2i)$
for $i=0$, \dots, $n-1$.
Such configurations are in bijection with lozenge tilings
of a hexagon as seen in Figure~\ref{fig:hexa}.
It can be shown,~\cite{Jo02} that the positions
of the lattice paths along the lines, i.e.~the
red particles in Figure~\ref{fig:hexa}, form a determinantal
process. But our interest is not in the particle process defined by the paths, which is
further studied in~\cite{OR06,Go07} but rather the
process defined by the empty sites (holes) between the paths, 
which in Figure~\ref{fig:hexa} are
the blue particles.
It suffices to assume 
$p\leq q$, as the other case reduces to this under 
reflection.

Along line $t$ there will be $r(t)$ blue particles, where $r(t)$ is given by (\ref{3.1}).
Let $\{x_j^{(t)}\}_{j=1,\dots,r(t)}$, with $x_{r(t)} < \cdots < x_1$ be their 
corresponding positions.
The highest possible position for $x^{(t)}_1$ is $a(t)$
and the lowest possible position for $x^{(t)}_{r(t)}$ is $b(t)$
where
\begin{align}
a(t)&= \begin{cases}
2(n-1)+t & t\leq q\\
2(n+q-1)-t & t\geq q
\end{cases}
&
b(t)&= \begin{cases}
-t & t\leq p\\
-2p+t& t\geq p.
\end{cases}
\end{align}
With $t$ in these equations regarded as a continuous parameter, $0 \le t \le p+q$, these lines
form the four non-vertical sides of the hexagon in Figure~\ref{fig:hexa}.

Let us introduce some fixed (virtual) particles by setting
\begin{align}
\label{2.1a}
x_{t+1}^{(t)} &:= -t - 2 = b(t) - 2 & (t&=0,\dots,p-1)
\intertext{and}
\label{2.1b}
x_0^{(t)} &:= 2(n+q)-t = a(t) + 2 & (t&=q,\dots,p+q-1).
\end{align}
Then it is a consequence of the combinatorics of this model, as
can easily be seen in Figure~\ref{fig:hexa}, that the blue particles
fulfill certain interlacing requirements,
\begin{equation}
x^{(t)}_{i+1} < x^{(t+1)}_{i+1} < x^{(t)}_{i},
\end{equation}
for $t=0$, \dots, $q-1$ and 
\begin{equation}
x^{(t)}_{i+1} < x^{(t+1)}_{i} < x^{(t)}_{i},
\end{equation}
for $t=q$, \dots, $p+q-1$. The probability of $\cup_{t=0}^{p+q} \{x^{(t)}_j\}_{j=1,\dots,r(t)} 
\}$ is therefore proportional to
\begin{equation}\label{3.2a}
\prod_{t=0}^{q-1} \tilde{\chi}(x^{(t)}, x^{(t+1)})
\prod_{t=q}^{p+q-1} \tilde{\chi}(x^{(t)}, x^{(t+1)}).
\end{equation}
Here $\tilde{\chi}(x,y)$ refers to the ordering between sets
$x=\{x_1,\dots,x_r\}$ and $y= \{y_1,\dots,y_r\}$ of the same
cardinality,
$$
x_r < y_r < \cdots < x_1 < y_1.
$$
Note that (\ref{3.2a}) is a discrete version of the joint PDF for
our finitization of the bead process, (\ref{3.2}).

The probability (\ref{3.2a}) can be written in terms of determinants.
Thus, using the identity
\begin{equation}\label{2.4a}
\chi_{x_r < y_{r-1} < \cdots < x_2 < y_1 < x_1} = \det [ \chi_{y_j > x_k} ]_{j,k=1,\dots,r}
\end{equation}
(see e.g.~\cite[Lemma 1]{FR02}), and writing $\phi(x,y) := \chi_{y>x}$, 
(\ref{3.2a}) reads 
\clabel{eqn:discmeasure}
\begin{equation}
\label{eqn:discmeasure}
\prod_{t=0}^{q-1}  \det [ \phi(x^{(t)}_i, x^{(t+1)}_j) ]_{i,j=1}^{r(t+1)}
\times
\prod_{t=q}^{p+q-1} \det [ \phi(x^{(t)}_i, x^{(t+1)}_{j-1}) ]_{i,j=1}^{r(t)} ,
\end{equation}
One way to think about this expression is that the blue particles
themselves are the positions of some dual random walks,
with step probability $\phi$. By the Lindstr\"om-Gessel-Viennot
method, the measure on configurations is
given exactly by (\ref{eqn:discmeasure}).

\begin{figure}
\includegraphics{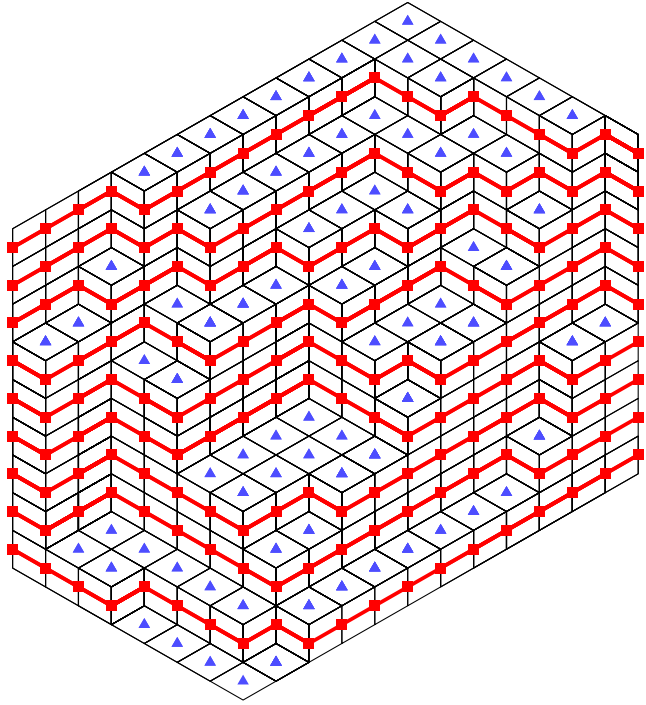}
\caption{In the text the spacing between lattice paths is 2 units, while in
the $x$-direction the spacing between lines is 1 unit, but in the figure these
lengths have been scaled so that the six internal corner angles of the hexagon are
equal. The three independent side lengths---left, bottom left, top left---are in
correspondence with the number of walkers $n$, and the parameters $p$ and $q$ of
the text respectively.
The red particles are marked by squares and the blue by triangles.}
\label{fig:hexa}
\end{figure}

A natural question to ask then is what is the probability
distribution of the blue particles at time $t$.
The idea is to compute the number of possible configurations
to the right and to the left of $t$ and divide by the total
number of possible tilings. For this we need the following
lemma.

\begin{lemma}
Let $t\leq p$.
Given some configuration $x^{(t)}$,
the number of configurations to the left of line $t$,
i.e.
\begin{equation}
H_t(x^{(t)})=\sum_{x^{(1)}, \dots, x^{(t-1)}}
\prod_{s =1}^{t-1}  \det [ \phi(x^{(s)}_i, x^{(s+1)}_j) ]_{i,j=1}^{s+1}
\end{equation}
is
\begin{equation}
H_t(x_1,\dots, x_t)= c_t
\Delta(x_1,\dots, x_t).
\end{equation}
where $\Delta$ is the Vandermonde determinant
and \begin{equation}
c_t=\frac{1}{2^{t(t-1)/2}\prod_{k =1}^{t-1} k! }.
\end{equation}
\end{lemma}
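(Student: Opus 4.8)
The plan is to prove the formula for $H_t$ by induction on $t$. The base case $t=1$ is immediate: there are no lines to the left, the empty product equals $1$, and $\Delta(x_1)=1$ while $c_1 = 1$, so $H_1(x_1)=1$ as required. For the inductive step, suppose the claim holds for $t-1$, so that $H_{t-1}(x^{(t-1)}) = c_{t-1}\,\Delta(x^{(t-1)})$ where $x^{(t-1)} = (x_1,\dots,x_{t-1})$. Then by the recursive structure of the sum,
\begin{equation}
H_t(x^{(t)}) = \sum_{x^{(t-1)}} \det\bigl[\phi(x^{(t-1)}_i, x^{(t)}_j)\bigr]_{i,j=1}^{t}\; H_{t-1}(x^{(t-1)}),
\end{equation}
where the sum runs over admissible configurations $x^{(t-1)}_{t-1} < \cdots < x^{(t-1)}_1$ interlacing with $x^{(t)}$. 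The task reduces to evaluating this single multiple sum and showing it produces $\frac{1}{2^{t-1}(t-1)!}\,c_{t-1}\,\Delta(x^{(t)}) = c_t\,\Delta(x^{(t)})$, which pins down exactly the claimed constant.

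The key computational device will be to recognize that summing a determinant against a Vandermonde over an ordered lattice set is itself a determinant of one-variable sums — essentially an Andr\'eief/Heine-type identity in its discrete form, or alternatively a direct application of the Lindstr\"om--Gessel--Viennot lemma read backwards. Concretely, since $H_{t-1}(x^{(t-1)}) = c_{t-1}\det\bigl[(x^{(t-1)}_i)^{t-1-k}\bigr]_{i,k=1}^{t-1}$ up to column normalization, the product of the two determinants summed over all $x^{(t-1)}$ collapses, via the Cauchy--Binet / Andr\'eief identity, to $c_{t-1}\det\bigl[\,\sum_{y} \phi(y, x^{(t)}_j)\, y^{\,t-1-k}\,\bigr]$. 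Here one must be careful that the summation range for the vector $x^{(t-1)}$ is the \emph{interlacing} region determined by $x^{(t)}$ (including the virtual particle $x^{(t)}_{t+1}=b(t)-2$ from (\ref{2.1a})), not an unrestricted box; the cleanest route is to first re-expand the interlacing constraint using the determinantal identity (\ref{2.4a}) so that the admissibility becomes encoded in $\phi$'s, then apply the unrestricted Andr\'eief identity.

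After the collapse, the entries are explicit one-dimensional sums of the form $\sum_{y < x^{(t)}_j} y^{m}$ (geometric-type / Bernoulli polynomial sums), and one needs to show the resulting $t\times t$ determinant equals $\frac{1}{2^{t-1}(t-1)!}\,\Delta(x^{(t)})$. I expect the main obstacle to be exactly this last step: controlling the boundary terms coming from the virtual particle and from the precise limits of summation, and verifying that the polynomial-in-$x^{(t)}_j$ entries, once the determinant is taken, reduce to a pure Vandermonde with no lower-order corrections. A convenient way to handle it is to perform column operations on the $\phi$-determinant \emph{before} summing — replacing powers $y^m$ by falling factorials or by a basis in which $\sum_{y} \phi(y,x)\,(\text{basis element})$ has a clean closed form — so that after summation each column is a monic polynomial of the appropriate degree in $x^{(t)}_j$, whence the determinant is manifestly $\Delta(x^{(t)})$ times the product of the (constant) leading coefficients, and tracking those leading coefficients yields the factor $1/(2^{t-1}(t-1)!)$. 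It may also be cleanest to sidestep the explicit sums altogether by invoking the LGV interpretation already mentioned in the text: $H_t$ counts families of non-intersecting dual walks, and a direct lattice-path count of such families in the triangular region bounded by line $t$ gives the Vandermonde and the constant $c_t$ via the hook-content or Lindstr\"om formula. I would present the Andr\'eief-based computation as the main argument and remark on the combinatorial alternative.
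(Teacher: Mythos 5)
Your proposal is correct in outline and, at its computational core, is the paper's proof: the same induction on $t$, the same reduction of the multiple sum to a determinant whose entries are discrete antiderivatives $p_k(x)=\sum_{y\le x} y^k$ (polynomials of degree $k+1$ with leading coefficient $1/(2(k+1))$, the $2$ coming from the lattice spacing), and the same extraction of the constant $1/(2^{t-1}(t-1)!)$ from the leading coefficients after column operations kill the lower-order terms. The one genuine difference is the mechanism for interchanging the sum with the determinant. The paper does not invoke Andr\'eief/Cauchy--Binet at all: it uses the interlacing indicator directly, observing that the admissible region for $y=x^{(t-1)}$ is a \emph{product} of disjoint intervals $x_i<y_i<x_{i+1}$, each variable $y_i$ occupying exactly one row of $\Delta(y)$, so by multilinearity the sum passes inside the determinant row by row; this immediately produces the square matrix of consecutive differences $p_k(x_i)-p_k(x_{i+1})$, which is then bordered to size $t\times t$ and reduced by row operations. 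Your Andr\'eief route, as written, has a dimension mismatch you would need to repair: the $\phi$-determinant linking lines $t-1$ and $t$ is $t\times t$ because it includes the fixed virtual particle $x^{(t-1)}_t=b(t-1)-2$, while the inductive Vandermonde $\Delta(x^{(t-1)})$ is only $(t-1)\times(t-1)$, so the "collapsed" matrix $\bigl[\sum_y\phi(y,x_j)\,y^{k}\bigr]$ is $t\times(t-1)$ and not a determinant. The fix is to first expand the $\phi$-determinant along the virtual-particle row (all of whose entries are $1$), apply Andr\'eief to each $(t-1)\times(t-1)$ minor, and reassemble -- at which point you recover exactly the bordered determinant the paper writes down, and the arbitrary lower cutoff $M$ in $p_k$ (needed to make $\sum_{y<x_j}y^k$ finite) cancels under the same column operations. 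So the approaches are equivalent up to elementary row/column manipulations; the paper's is slightly more economical because it never needs the $\phi$-determinant inside the induction, while yours makes the Lindstr\"om--Gessel--Viennot structure more visible.
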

\begin{proof}
Induction over $t$.
For $t=1$ it is indeed true that $H_1(x)\equiv 1$.
Assuming the statement is true for $t$, consider
$t+1$. By summing the distinct variables along distinct rows
of the determinant we have
\begin{align}
H_{t+1}(x_1,\dots, x_{t+1}) &=
\sum_{x_i< y_i < x_{i+1}}
c_t \left| \begin{matrix}
1 & y_1 & \dots & y_1^{t-1}\\
\vdots\\
1 & y_t & \dots & y_t^{t-1}\\
\end{matrix}\right|
\\
&=c_t \left| \begin{matrix}
p_0(x_1)-p_0(x_2)  & p_1(x_1)-p_1(x_2) & \dots & p_{t-1}(x_1)-p_{t-1}(x_2)\\
\vdots\\
p_0(x_t)-p_0(x_{t+1})  & p_1(x_{t})-p_1(x_{t+1}) & \dots & p_{t-1}(x_{t})-p_{t-1}(x_{t+1})\\
\end{matrix}\right|
\intertext{where $p_k(x)=\sum_{y=2M}^{x/2} (2y)^k$ for $x$ even
and $p_k(x)=\sum_{y=2M}^{(x-1)/2} (2y+1)^k$ for $x$ odd, $M$ is an arbitrary
big negative number. 
It can be checked that $p_k$ is a polynomial and
that  $p_k(x)=x^{k+1}/2(k+1) + O(x^k)$. Performing column operations
in the determinant removes dependence on all but the highest order
coefficient in $p_k$ and gives}
&=c_t \left| \begin{matrix}
\frac{1}{2}(x_1-x_2)  & \frac{1}{4}(x_1^2-x_2^2) & \dots &\frac{1}{2t} (x_1^t-x_2^t)\\
\vdots\\
\frac{1}{2}(x_t-x_{t+1})  & \frac{1}{4}(x_t^2-x_{t+1}^2) & \dots &\frac{1}{2t} (x_t^t-x_{t+1}^t)\\
\end{matrix}\right|
\intertext{Pull out constants and border the matrix to make it $t+1$ by $t+1$ to obtain}
&=\frac{c_t}{2^t t!} \left| \begin{matrix}
0 & x_1-x_2  & x_1^2-x_2^2 & \dots &x_1^t-x_2^t\\
\vdots\\
0 & x_t-x_{t+1}  & x_t^2-x_{t+1}^2 & \dots &x_t^t-x_{t+1}^t\\
1 & x_{t+1} & x_{t+1}^2 & \dots &x_{t+1}^t
\end{matrix}\right|.
\end{align}
Row operations now complete the proof.
\end{proof}

\clabel{thm:discrete-model}
\begin{thm}
\label{thm:discrete-model}
The probability that at time $t$
the blue particles are at positions
$x_1> \dots > x_{r(t)}$ is
\begin{equation}
p_t(x_1, \dots, x_{r(t)} ) =
Z_t^{-1} \Delta^2(x_1, \dots, x_{r(t)}) \prod_{i=1}^{r(t)} f_t(x_i)
\end{equation}
where
\begin{equation}
f_t(x) = \prod_{k=1}^{|q-t|} (a(t)+2k-x)
\prod_{k=1}^{|p-t|} (x-b(t)+2k)
\end{equation}
and $Z_t$ is some normalizing constant.
\end{thm}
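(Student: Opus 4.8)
\emph{Proof plan.}
Since the weight (\ref{eqn:discmeasure}) is a product whose $s$-th factor involves only the particles on lines $s$ and $s+1$, fixing line $t$ and summing out every other line splits the marginal as
\[
p_t(x^{(t)})\;\propto\;H_t(x^{(t)})\,\bar H_t(x^{(t)}),
\]
where $H_t$ is the sum of $\prod_{s<t}\det[\phi(x^{(s)}_i,x^{(s+1)}_j)]$ over all admissible configurations on lines $1,\dots,t-1$, i.e.\ the quantity evaluated in the Lemma, and $\bar H_t$ is the analogous sum over lines $t+1,\dots,p+q-1$. The plan is to write each of $H_t$ and $\bar H_t$ as a constant times a Vandermonde determinant in the $r(t)$ genuine particles $x_1,\dots,x_{r(t)}$ of line $t$ adjoined with certain frozen particles, and then to multiply the two.

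For $H_t$ the Lemma does the work when $t\le p$, giving $H_t=c_t\,\Delta(x_1,\dots,x_t)$ with no frozen particles. For $t>p$ I would continue the Lemma's induction past line $p$: once the cascade leaves the growing regime the summation intervals acquire a genuine finite one-sided boundary, inherited from the virtual particles (\ref{2.1a}), and carrying out the column-operation step of the Lemma's proof with that boundary in place of the inconsequential cutoff used there appends frozen particles to the Vandermonde. For $\bar H_t$ one runs the mirror-image induction, descending from line $p+q-1$ down to line $t$; equivalently one invokes the central ($180^\circ$) symmetry of the hexagon, which sends line $t$ to line $p+q-t$ and reflects the positions, thereby turning $\bar H_t$ into a left-hand count of the type just described, now with the one-sided boundaries coming from the top-wall virtual particles (\ref{2.1b}).

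The substance of the argument is to identify the frozen particles. They are the positions to which the one-sided boundaries of the interlacing intervals migrate as one iterates toward line $t$; because the admissible positions on a given line all have a fixed parity, each iteration displaces the active boundary by exactly $2$, so the migrating boundaries telescope into the two arithmetic progressions $a(t)+2,a(t)+4,\dots,a(t)+2|q-t|$ (from above) and $b(t)-2,b(t)-4,\dots,b(t)-2|p-t|$ (from below). How these $|q-t|+|p-t|$ frozen particles split between $H_t$ and $\bar H_t$ depends only on which of the three regimes of (\ref{3.1}) contains $t$; one checks in each case, using (\ref{3.1}), that the counts balance. Writing $H_t\propto\Delta(x_1,\dots,x_{r(t)}\cup F_L)$ and $\bar H_t\propto\Delta(x_1,\dots,x_{r(t)}\cup F_R)$ with $F_L\cup F_R$ the full list of frozen particles, and using
\[
\Delta(x_1,\dots,x_{r(t)}\cup F)=\pm\,\Delta(x_1,\dots,x_{r(t)})\,\Delta(F)\prod_{i=1}^{r(t)}\prod_{c\in F}(x_i-c),
\]
the product $H_t\bar H_t$ becomes $\Delta^2(x_1,\dots,x_{r(t)})\prod_{i=1}^{r(t)}f_t(x_i)$ once the frozen-only Vandermondes $\Delta(F_L),\Delta(F_R)$, the constant prefactors, and an overall sign are swept into $Z_t$, where $f_t(x)=\prod_{k=1}^{|q-t|}(a(t)+2k-x)\prod_{k=1}^{|p-t|}(x-b(t)+2k)$. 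I expect this migration/telescoping step to be the real obstacle: one must make the induction run uniformly through the constant and shrinking regimes, and pin down both the common difference ($2$, forced by parity) and the first terms ($a(t)+2$ and $b(t)-2$ rather than shifted values) exactly.
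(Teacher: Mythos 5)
Your plan is correct and is essentially the paper's own argument: the marginal factorizes into a left count times a right count, each evaluated via the Lemma after adjoining the frozen particles $a(t)+2k$ ($k=1,\dots,|q-t|$) and $b(t)-2k$ ($k=1,\dots,|p-t|$) on line $t$, and the cross factors of the augmented Vandermondes produce $\prod_i f_t(x_i)$ while the frozen-only Vandermondes and constants are absorbed into $Z_t$. The only cosmetic difference is that the paper does not re-run the Lemma's induction with boundary terms as you anticipate having to do: it places the frozen (virtual) particles directly on line $t$ so that the augmented configuration is again a full growing cascade, and the Lemma then applies verbatim to the enlarged particle list in each of the three regimes.
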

Note that this is called the Hahn ensemble in the literature, 
see for example~\cite{Jo02}.

\begin{proof}
\begin{figure}
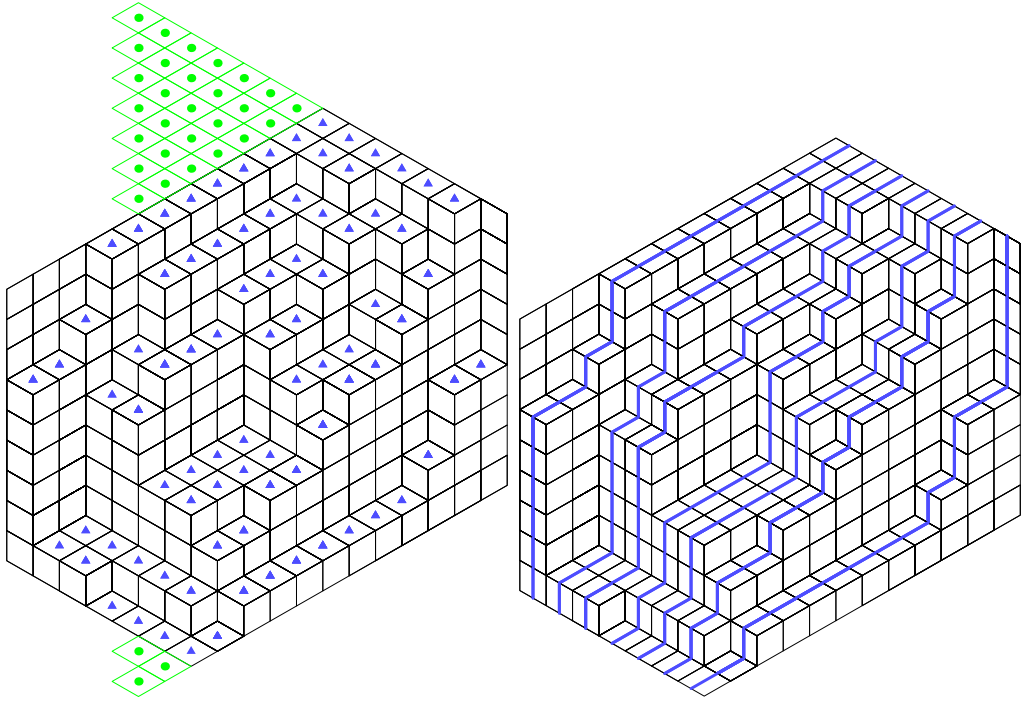

\includegraphics[scale=0.8]{fighexagonvirt.mps}
\includegraphics[scale=0.8]{fighexagonpaths.mps}
\caption{Construction of virtual particles as used in the case
$t \le p$ of the proof of Theorem~\ref{thm:discrete-model}.
The blue particles can be seen as 
the positions of some dual random walkers, 
leading to a probability distribution
proportional to the expression in~(\ref{eqn:discmeasure}).
The blue particles are marked with triangles while the 
virtual blue particles are green circles.}
\label{fig:hexb}
\end{figure}

Case $t\leq p$: The area to the left of $t$
can be tiled in $H_t(x_1,\dots, x_t)$ ways and the
area to the right of $t$ can be tiled in
$H_{p+q-t}(x_{-q+t+1}, \dots,  x_1,\dots, x_t, \dots, x_p)$ ways
where we introduce virtual particles
$x_{-q+t+i}=a(t)+2(q-t-i+1)$, for $i=1$, \dots, $q-t$
and $x_{t+i}= b(t)-2i$ for $i=1$, \dots, $p-t$
(see Figure~\ref{fig:hexb})
Then
\begin{equation}
p_t(x_1, \dots, x_{r(t)} ) =
\text{const}^{-1} H_t(x_1,\dots, x_t)H_{p+q-t}(x_{-q+t+1}, \dots,  x_1,\dots, x_t, \dots, x_p)
\end{equation}
which proves the theorem in this case since the
part of the Vandermonde determinant that has to do with the
virtual particles is exactly $f_t$.

Case $p\leq t\leq q$:
The area to the left of $t$
can be tiled in $H_t(x_1,\dots, x_p, \dots,  x_t)$ ways
where we introduce virtual particles $x_{p+i}=b(t)-2i$
for $i=1$, \dots, $t-p$. The area to the
right of $t$ can be tiled in exactly
$H_{p+q-t}(x_{-q+t+1},\dots,  x_1,\dots, x_p)$
ways where the virtual particles $x_{-q+t+i}=a(t)+2i$ for
$i=1$, \dots, $q-t$.
The theorem then follows just like in the first case.

Case $q\leq t$.
 The area to the left of $t$
can be tiled in $H_t(x_{q-t+1}, \dots, x_1,\dots, x_p)$ ways
with virtual particles $x_{q-t+i}= a(t)+q-t-2i$ for $i=1$, \dots, $t-q$
and $x_{p+q-t+i}=b(t)-2i$ for $i=1$, \dots, $t-p$.
Again the theorem follows.
\end{proof}

In~\cite{JN06} it is shown that the positions of 
all the blue particles---with the probability measure defined
by~(\ref{3.2a})---is a determinantal point process and 
its kernel is computed in terms of the Hahn polynomials. 
An alternative proof of Proposition~\ref{thm:jacobi_kernel}
would be to take their expression and 
applying the known limit formula 
\begin{equation}
\lim_{N\rightarrow\infty} Q_n(Nx, \alpha, \beta, N) = 
\frac{P_n^{\alpha,\beta}(1-2x)}{P_n^{\alpha,\beta}(1)}
\end{equation}
where $Q_n$ and $P^{\alpha,\beta}_n$ is the $n$:th Hahn polynomial 
respectively Jacobi polynomial. 
Our motivation for proving Proposition~\ref{thm:jacobi_kernel}
from first principles in section~\ref{sec:corr_functions}
is that our proof is more direct and intuitive. Also, the 
idea of virtual particles in fixed positions, as illustrated in 
the previous proof, has not been previously used to our knowledge.
That idea also gives a simple proof, see~\cite{No09}, of 
a result conjectured in~\cite{JN06} and proved in~\cite{OR06},
that the GUE minor process can be obtained as a scaling limit
of lozenge tilings of a hexagon close to the boundary. 

\section{Continuous model}

We are interested in the measure corresponding to the
probability~(\ref{eqn:discmeasure}) under the
rescaling $x^{(i)}_j \mapsto  x^{(i)}_j / n$ when $n\rightarrow\infty$
while $p$ and $q$ are kept fixed.
It is easy to see that this is exactly the measure corresponding to the
PDF~(\ref{3.2}). We have thus derived our finitized bead process as the
continuum limit of a tiling model.
In fact a semi-continuous lattice paths model (see Figure~\ref{fig:buses})
equivalent to our finitized bead process was
proposed in~\cite{BBDS06}
to model the bus transportation system in Cuernavaca (Mexico).
In that context, for $p \le t \le q$ the particle configuration $\{x_j^{(t)}\}_{j=1,\dots,r(t)}$
is the time at which bus number $j$ arrives at bus stop labelled $t$.

For future reference, we note that analogous to the equality between (\ref{3.2a}) and
(\ref{eqn:discmeasure}), use of~(\ref{2.4a}) allows~(\ref{3.2}) to be rewritten as
\begin{equation}
\label{2.5x}
\prod_{t=1}^{q-1}  \det [ \phi(x^{(t)}_i, x^{(t+1)}_j) ]_{i,j=1}^{r(t+1)}
\times
\prod_{t=q}^{p+q-1} \det [ \phi(x^{(t)}_i, x^{(t+1)}_{j-1}) ]_{i,j=1}^{r(t)} ,
\end{equation}
which apart from the absence of the $t=0$ term in the first product is
formally identical to~(\ref{eqn:discmeasure}). The variables take values on
different sets though, with those in~(\ref{eqn:discmeasure}) being confined to
lattice points, and furthermore in~(\ref{2.5x}) the virtual particles are now
specified by
\begin{align}\label{2.5x1}
x_{(t+1)}^{(t)} &= 0 \quad (t=1,\dots,p-1) &
x_0^{(t)} &= 1 \quad (t=q,\dots,p+q-1)
\end{align}
instead of (\ref{2.1a}), (\ref{2.1b}).

Rescaling the measure in Theorem~\ref{thm:discrete-model}
gives the following (cf.~\cite[eq.~(10)]{BBDS06}).

\begin{prop}
The projection of the PDF (\ref{3.2})
to $x^{(t)}$ is for
$x_1> \dots > x_{r(t)}$ the density
\begin{equation}\label{3.1x}
p_t(x_1, \dots, x_{r(t)} ) =
Z_t^{-1} \Delta^2(x_1, \dots, x_{r(t)}) \prod_{i=1}^{r(t)} f_t(x_i)
\end{equation}
where
\begin{equation}\label{ft}
f_t(x) = (1-x)^{|q-t|} x^{|p-t|}
\end{equation}
and $Z_t$ is some normalizing constant.
\end{prop}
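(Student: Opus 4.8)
The plan is to read off this Proposition from Theorem~\ref{thm:discrete-model} by passing to the continuum limit, exactly as indicated in the paragraph preceding the statement, where (\ref{3.2}) is identified as the $n\to\infty$ rescaled limit of (\ref{eqn:discmeasure}). Fix a line $t$. By Theorem~\ref{thm:discrete-model} the (atomic) law of the blue particles on line $t$ is proportional to $\Delta^2(x_1,\dots,x_{r(t)})\prod_i g_t(x_i)$ with $g_t(x)=\prod_{k=1}^{|q-t|}(a(t)+2k-x)\prod_{k=1}^{|p-t|}(x-b(t)+2k)$, the particles running over lattice points of $[b(t),a(t)]$. Introduce the affine change of variable $\tilde x=(x-b(t))/L$, $L:=a(t)-b(t)$, which carries $[b(t),a(t)]$ onto $[0,1]$ and sends the outer virtual particles at $b(t)-2$ and $a(t)+2$ (cf.\ (\ref{2.1a}), (\ref{2.1b})) to $0$ and $1$, in accordance with (\ref{2.5x1}); with $p,q$ fixed one has $L\to\infty$ linearly in $n$.

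Under this substitution $\Delta^2(x)=L^{r(t)(r(t)-1)}\Delta^2(\tilde x)$, a constant absorbed into the normalization. For the weight, write
\[
a(t)+2k-x=L\Bigl((1-\tilde x)+\frac{2k}{L}\Bigr),\qquad x-b(t)+2k=L\Bigl(\tilde x+\frac{2k}{L}\Bigr),
\]
so that $g_t(x)=L^{|q-t|+|p-t|}\prod_{k=1}^{|q-t|}\!\bigl((1-\tilde x)+\tfrac{2k}{L}\bigr)\prod_{k=1}^{|p-t|}\!\bigl(\tilde x+\tfrac{2k}{L}\bigr)$. Since $k$ ranges over a set of size independent of $n$ and $L\to\infty$, every bracket tends to $(1-\tilde x)$, resp.\ $\tilde x$, and the power of $L$ out front is again a constant; hence $\prod_i g_t(x_i)$ is asymptotic to a constant multiple of $\prod_i (1-\tilde x_i)^{|q-t|}\tilde x_i^{|p-t|}=\prod_i f_t(\tilde x_i)$, with $f_t$ as in (\ref{ft}). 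Thus the rescaled law converges to $Z_t^{-1}\Delta^2(\tilde x)\prod_i f_t(\tilde x_i)$ on $\{1>\tilde x_1>\cdots>\tilde x_{r(t)}>0\}$, which is (\ref{3.1x}); convergence of the normalizing constants is not an issue because the limiting density is a polynomial on a bounded set, hence integrable.

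The one point requiring care is that the $2k/L$ corrections are uniformly small only on the scale $1-\tilde x\gg 1/L$ (and $\tilde x\gg 1/L$), i.e.\ on scales large compared with the lattice spacing, so the statement is properly understood as weak convergence of the rescaled atomic measure of Theorem~\ref{thm:discrete-model} to the absolutely continuous measure (\ref{3.1x}); this follows at once from the displayed identities, which give locally uniform convergence on the open simplex together with a uniform bound on the unnormalized densities there. Alternatively one may prove (\ref{3.1x}) directly, bypassing Theorem~\ref{thm:discrete-model}: integrating (\ref{3.2}) over all lines other than $t$ --- organised as ``left of $t$'' and ``right of $t$'' as in the proof of Theorem~\ref{thm:discrete-model}, now with virtual particles fixed at $0$ and $1$ per (\ref{2.5x1}) --- produces, via the continuum analogue of the Lemma, two Vandermonde factors in $x^{(t)}$ together with powers of $(1-x)$ and $x$ contributed by the pinned virtual particles, and collecting the exponents gives precisely $f_t$. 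I expect the only mild obstacle to be keeping the three ranges of $t$ (namely $t\le p$, $p\le t\le q$, $q\le t$) and the associated virtual-particle bookkeeping straight, since the analytic content is a one-line limit.
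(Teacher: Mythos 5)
Your main argument is exactly the paper's route: the paper offers no proof beyond the remark that "rescaling the measure in Theorem~\ref{thm:discrete-model} gives the following," and your proposal simply carries out that rescaling in detail (affine map of $[b(t),a(t)]$ onto $[0,1]$, the Vandermonde picking up a constant, and the Hahn weight degenerating to the Jacobi weight $f_t$), which is correct. The extra care you take with the $2k/L$ corrections and with weak convergence is sound but not needed beyond what the paper implicitly assumes.
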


\begin{figure}
\includegraphics[scale=0.7]{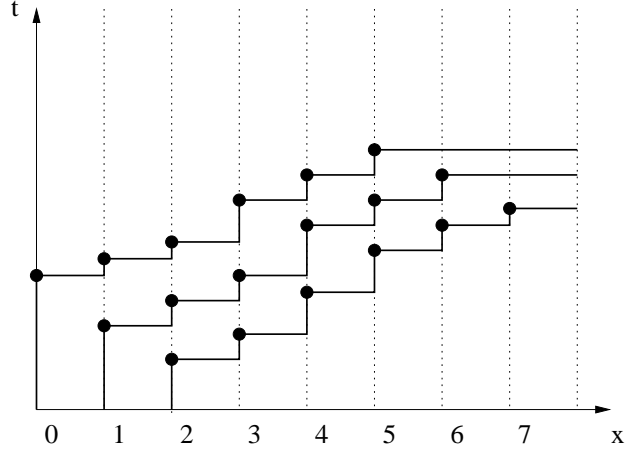}
\caption{In this lattice paths model introduced in \cite{BBDS06},
up/right corners correspond to arrival times at stations $2,3,4,5$.
Regarding all the up/right corners as particles our finitized bead process
results.}
\label{fig:buses}
\end{figure}

The PDF (\ref{3.1x}) is familiar in random matrix theory as the eigenvalue
probability density function for the Jacobi unitary ensemble (see e.g.~\cite{Fo02}).
Due to this interpretation, it has appeared in a number of previous studies, and some
properties of relevance to the bead process are known. Two of these are the support
of the limiting density, and the limiting density itself. To specify these results,
for notational convenience let us write $r(t)=n$ and 
\begin{equation}\label{ft1}
f_t(x) = x^\alpha (1 - x)^\beta,
\end{equation}
and consider (\ref{3.1x}) in the case that $\alpha = an$, $\beta = bn$ and $n \to \infty$.
Then we know from \cite{Co05,Fo02,Wa80} that the limiting support is the interval
$[c,d]$, where $c$ and $d$ are specified in terms of $a,b$ by
\begin{align}\label{be.1}
 c + d &= \frac{(2+a+b)^2 + (a^2 - b^2)}{ (2 + a + b)^2 } \nonumber \\
 d - c &= \sqrt{(c+d)^2 - 4cd},
\end{align}
and in this interval
\begin{equation}\label{be.2}
\tilde{\rho}_{(1)}(y) = \lim_{n \to \infty} \frac{1}{n} \rho_{(1)}(y) \Big 
|_{\begin{smallmatrix} \alpha = a n\\  \beta = bn \end{smallmatrix}} =
\frac{2 + a + b}{ 2 \pi} \frac{\sqrt{(y-c)(d-y)}}{y (1 - y) }.
\end{equation}

To make use of these results, let us introduce a scaled label $S$ for line $t$, and a
scaled measure of the difference in the left bottom and left top side measures
of the hexagon by
\begin{align}\label{Sp}
S &= t/p, & k &= (q-p)/p
\end{align}
respectively. We can then analyze the limit of the finitized bead process in which
$t,p,q \to \infty$ with $S,k$ fixed. Note that the scaled line label can take
on values
\begin{equation}
0 \le S \le 2 + k.
\end{equation}
Straightforward application of (\ref{be.1}) and (\ref{be.2}) reveals the following
limit theorem for the finitized bead process.

\begin{thm}\label{thm:b1}
In the above specified setting, the support of the bead process along scaled line
$S$ is $[c_S, d_S]$ with
\begin{align}\label{be.3}
& c_S =
 \frac{Sk}{(k+2)^2} + \frac{1}{k+2} - \frac{2\sqrt{S(k+1)(k+2-S)}}{(k+2)^2} \nonumber \\
& d_S =
 \frac{Sk}{(k+2)^2} + \frac{1}{k+2} + \frac{2\sqrt{S(k+1)(k+2-S)}}{(k+2)^2}.
\end{align}
The normalized density is given by (\ref{be.2}) with $c=c_S$, $d=d_S$ and $(a,b)$ given by
\begin{align}\label{cdab}
&\left ( \frac{1-S}{S}, \frac{k+1-S}{S} \right ), &
(S-1&, k+1-S), &
&\left ( \frac{S-1}{2+k-S}, \frac{S-k-1}{2+k-S} \right)
\end{align}
for $0\le S \le 1$, $1 \le S \le 1 +k$ and $1+k \le S \le 2+k$ respectively.
\end{thm}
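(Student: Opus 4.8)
The plan is to derive Theorem~\ref{thm:b1} as a direct specialization of the general Jacobi-ensemble asymptotics recorded in~(\ref{be.1}) and~(\ref{be.2}), so essentially all the work is bookkeeping: translating the exponents in $f_t$ into the scaling variables $(a,b)$ and then simplifying the resulting algebraic expressions for $c,d$. First I would recall from the Proposition that the projection to line $t$ has weight $f_t(x)=(1-x)^{|q-t|}x^{|p-t|}$, so in the notation of~(\ref{ft1}) we have $\alpha=|p-t|$ and $\beta=|q-t|$, while the number of particles is $n=r(t)$ given by~(\ref{3.1}). In each of the three regimes one computes $a=\alpha/n$ and $b=\beta/n$ with $n\to\infty$, $S=t/p$, $k=(q-p)/p$ fixed. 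For $0\le S\le 1$ (so $t\le p$), $n=t$, $\alpha=p-t$, $\beta=q-t$, giving $a=(p-t)/t=(1-S)/S$ and $b=(q-t)/t=(k+1-S)/S$; for $1\le S\le 1+k$ (so $p\le t\le q$), $n=p$, $\alpha=t-p$, $\beta=q-t$, giving $a=(t-p)/p=S-1$ and $b=(q-t)/p=k+1-S$; for $1+k\le S\le 2+k$ (so $q\le t$), $n=p+q-t$, $\alpha=t-p$, $\beta=t-q$, and dividing by $p+q-t$ and using $p+q-t=p(2+k-S)$ yields $a=(S-1)/(2+k-S)$ and $b=(S-k-1)/(2+k-S)$. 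This reproduces the three pairs in~(\ref{cdab}), and it is worth remarking that in the first and third regimes $a,b\ge 0$ as required for~(\ref{be.1})--(\ref{be.2}) to apply.

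The next step is to insert these $(a,b)$ values into~(\ref{be.1}) and check that the common formula collapses to~(\ref{be.3}) in all three cases. The key simplification is the quantity $2+a+b$: in the first regime it equals $2+(1-S)/S+(k+1-S)/S=(k+2)/S$; in the second it equals $2+(S-1)+(k+1-S)=k+2$; in the third it equals $2+(S-1)/(2+k-S)+(S-k-1)/(2+k-S)=(k+2)/(2+k-S)$. Thus in every regime $(2+a+b)$ is $(k+2)$ divided by the relevant normalization, and one finds $(c+d)=\tfrac{(2+a+b)^2+a^2-b^2}{(2+a+b)^2}$ simplifies to $\tfrac{2Sk}{(k+2)^2}+\tfrac{2}{k+2}$ after clearing denominators (using $a^2-b^2=(a-b)(a+b)$ and expressing $a\pm b$ in terms of $S,k$). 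Then from~(\ref{be.1}) $d-c=\sqrt{(c+d)^2-4cd}$; since $cd=\tfrac{ab}{(2+a+b)^2}\cdot\text{(correcting factors)}$ — more precisely one reads off $cd$ from the fact that $c,d$ are the roots of $y^2-(c+d)y+cd$, and a short computation gives $cd=\tfrac{1}{(k+2)^2}\cdot\text{(something)}$ — I would verify that $(c+d)^2-4cd=\tfrac{16S(k+1)(k+2-S)}{(k+2)^4}$, whose square root is $\tfrac{4\sqrt{S(k+1)(k+2-S)}}{(k+2)^2}$, yielding exactly the $\pm$ terms in~(\ref{be.3}). Finally, half-sum and half-difference give $c_S,d_S$ as stated. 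The density claim is then immediate: it is~(\ref{be.2}) verbatim with the computed $c,d,a,b$, so nothing further is needed.

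The main obstacle, such as it is, is purely algebraic: one must carry out the simplification of $cd$ (equivalently of $(c+d)^2-(d-c)^2$) cleanly enough to see the product $S(k+1)(k+2-S)$ emerge, and this is easiest done uniformly by working with the single parameter $u=2+a+b$ and the combinations $a+b$, $ab$ before substituting the regime-specific values — otherwise one repeats a messy computation three times. A secondary subtlety worth flagging is that on the boundary lines $S=1$ and $S=1+k$ two of the formulas for $(a,b)$ must agree; this follows because at $S=1$ the first pair $((1-S)/S,(k+1-S)/S)=(0,k)$ and the second $(S-1,k+1-S)=(0,k)$ coincide (and similarly at $S=1+k$), so the endpoint functions $c_S,d_S$ in~(\ref{be.3}) — which are manifestly continuous in $S$ — are consistent across the three regimes, and one should note this to justify that the piecewise description in~(\ref{cdab}) glues to a single continuous support function. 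With these remarks in place the proof is complete.
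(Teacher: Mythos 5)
Your proposal follows exactly the route the paper intends: the paper offers no proof beyond the sentence ``Straightforward application of (\ref{be.1}) and (\ref{be.2}) reveals the following limit theorem,'' and your regime-by-regime identification of $n=r(t)$, $\alpha=|p-t|$, $\beta=|q-t|$, hence of $(a,b)$, together with the simplification $2+a+b=(k+2)/S$, $k+2$, $(k+2)/(2+k-S)$ and the resulting common value of $c+d$, is correct and is all the bookkeeping the theorem requires. The one place your write-up cannot close as stated is the product $cd$: equation (\ref{be.1}) is circular there (it expresses $d-c$ through $cd$ without giving $cd$), so this datum must be imported from the cited references, and your tentative guess $cd=ab/(2+a+b)^2$ is not the right one. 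The correct value in this normalization is $cd=a^2/(2+a+b)^2$ (equivalently, by the symmetry $x\mapsto 1-x$, $(1-c)(1-d)=b^2/(2+a+b)^2$), coming from the explicit endpoints $c,d=\bigl(\sqrt{(a+1)(a+b+1)}\mp\sqrt{b+1}\bigr)^2/(a+b+2)^2$; with it one gets the regime-independent identity
\begin{equation*}
(c+d)^2-4cd=\frac{16(a+1)(b+1)(a+b+1)}{(2+a+b)^4}=\frac{16\,S(k+1)(k+2-S)}{(k+2)^4},
\end{equation*}
which is precisely the square of the $\pm$ term in (\ref{be.3}). With that substitution your argument is complete; the continuity check at $S=1$ and $S=1+k$ is a nice touch but not logically needed.
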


We note that at $S=0$, $c_S = d_S = 1/(k+2)$, and similarly at $S=k+2$,
$c_S = d_S = (k+1)/(k+2)$. The degeneration of the support to a single point 
at the left and right sides is in keeping with the number of particles in the
bead process starting off at one at these sides. In (\ref{ft}), when $t=p$ the
exponent $\alpha$ in (\ref{ft1}) vanishes. Thinking of the factor $x^\alpha$ as
a repulsion from $x=0$, with this term not present we would expect that the
left support to be its smallest allowed value, $c_S=0$, which is indeed the
case in (\ref{be.3}) with $S=1$. Similarly, in  (\ref{ft}) with $t=q$ the
exponent $\beta$ in (\ref{ft1}) vanishes, and correspondingly in
 (\ref{be.3}) $d_S=1$ for $S=1+k$.

A particular realization of the boundary of support as implied by (\ref{be.3})
is plotted in Figure \ref{fig:bead.16}.

\section{Realization as a random matrix PDF}

We have seen that the joint PDF (\ref{3.2}) defining our finitization of the bead
process arises naturally in the context of the tiling of an $abc$-hexagon, or
equivalently by the consideration of non-intersecting paths. Here it will be
shown that (\ref{3.2}) can be obtained as the joint eigenvalue PDF of a sequence of
random matrices.

Our construction is based on theory related to certain random corank 1 projections contained
in \cite{Ba01a,FR02b}, which will now be revised. Let
\begin{align*} 
M &= \Pi A \Pi,&  \Pi &= {\mathbb I} - \vec{x} \vec{x}^\dagger
\end{align*}
where
\begin{equation*}
A = {\rm diag} \Big ( (a_1)^{s_1}, (a_2)^{s_2}, \dots , (a_n)^{s_n} \Big ).
\end{equation*}
Here the notation $(a)^p$ means $a$ is repeated $p$ times, and it is assumed
$a_1 > a_2 > \cdots > a_n$, while $\vec{x}$ is a normalized complex Gaussian vector
of the same number of rows as $A$. The eigenvalues $a_i$ of $A$ occur in $M$ with
multiplicity $s_i - 1$. Zero is also an eigenvalue of $M$. The remaining $n-1$
eigenvalues of $M$ occur at the zeros of the random rational function
\begin{equation}\label{rrf}
\sum_{i=1}^n \frac{q_i}{x - a_i}
\end{equation}
where $(q_1,\dots,q_n)$ has the Dirichlet distribution D$[s_1,\dots,s_n]$. With these
$n-1$ eigenvalues denoted $\lambda_1 > \cdots > \lambda_{n-1}$, it follows from this
latter fact that their joint distribution is equal to
\begin{equation}\label{2.1}
\frac{\Gamma(s_1 + \cdots + s_n) }{ \Gamma(s_1) \cdots \Gamma(s_n) }
\frac{\prod_{1 \le j < k \le n - 1} (\lambda_j - \lambda_k) }{
\prod_{1 \le j < k \le n} (a_j - a_k)^{s_j + s_k - 1} } 
\left(\prod_{j=1}^{n-1} \prod_{p=1}^n | \lambda_j - a_p |^{s_p - 1}\right)
\chi_{a_1 > \lambda_1 > a_2 > \cdots > \lambda_{n-1} > a_n}
\end{equation}
(as done in (\ref{1.2}), the indicator function will be abbreviated $\chi(\lambda, a)$).

After this revision, we begin the construction by forming $M_1 = \Pi A_1 \Pi$, where
$A_1 = {\rm diag} ( (0)^p, (1)^q)$. It follows from the above that $M_1$ has one
eigenvalue $\lambda_1^{(1)}$ different from 0 and 1 satisfying $0 < \lambda_1^{(1)} < 1$,
and this eigenvalue has PDF proportional to
\begin{equation*}
(\lambda_1^{(1)})^{p-1} (1 - \lambda_1^{(1)})^{q-1}.
\end{equation*}
Now, for $r=2,\dots,p$ inductively generate $\{\lambda_i^{(r)} \}_{i=1,\dots,r}$ as the 
eigenvalues different from 0 and 1 of the matrix
$$
M_r = \Pi A_r \Pi
$$
where
$$
A_r = {\rm diag} \Big ( (0)^{p-r+1}, \lambda_1^{(r-1)}, \dots, \lambda_{r-1}^{(r-1)},
(1)^{q-r+1} \Big ).
$$
It follows from (\ref{2.1}) that the PDF of $\{\lambda_j^{(r)} \}_{j=1,\dots,r}$, for given
$\{\lambda_j^{(r-1)} \}_{j=1,\dots,r-1}$, is proportional to
\begin{equation}\label{2.3}
\chi(\lambda^{(r-1)} \prec \lambda^{(r)} )
{ \prod_{i < j}^r (\lambda_i^{(r)} - \lambda_j^{(r)} )
\prod_{k=1}^r (1 - \lambda_k^{(r)})^{q-r} (\lambda_k^{(r)})^{p-r} \over
\prod_{i < j}^{r-1} (\lambda_i^{(r-1)} - \lambda_j^{(r-1)} )
\prod_{k=1}^{r-1} (1 - \lambda_k^{(r-1)})^{q-r+1} (\lambda_k^{(r-1)})^{p-r+1}}.
\end{equation}
Forming the product of (\ref{2.3}) over $r=1,\dots,p$ gives the joint PDF of
$ \cup_{s=1}^p \{ \lambda_i^{(s)} \}$ which is therefore proportional to
\begin{equation}\label{3.2b}
\prod_{r=2}^p \chi(\lambda^{(r-1)}, \lambda^{(r)} )
\prod_{i < j}^p (\lambda_i^{(p)} - \lambda_j^{(p)}) \prod_{k=1}^p (1 - \lambda_k^{(q)})^{q-p}.
\end{equation}

Next, for $r=1,\dots,q-p$, inductively generate $\{\lambda_i^{(p+r)} \}_{i=1,\dots,p}$
as the eigenvalues different from 0 and 1 of
$$
M_{p+r} = \Pi A_{p+r} \Pi
$$
where
$$
A_{p+r} = {\rm diag} \Big ( \lambda_1^{(p+r-1)},\dots , \lambda_p^{(p+r-1)}, (1)^{q-p-r+1} \Big ).
$$
We have from (\ref{2.1}) that the PDF of $\{\lambda_j^{(p+r)} \}_{j=1,\dots,p}$, for given
$\{ \lambda_j^{(p+r-1)} \}_{j=1,\dots,p}$, is proportional to
\begin{equation}\label{4.1}
\chi(\lambda^{(p+r-1)}, \lambda^{(p+r)} \cup \{ 0 \} )
{ \prod_{i < j}^p (\lambda_i^{(p+r)} - \lambda_j^{(p+r)} )
\prod_{k=1}^p  (1 - \lambda_k^{(p+r)})^{q-p-r} \over
\prod_{i < j}^{p} (\lambda_i^{(p+r-1)} - \lambda_j^{(p+r-1)} )
\prod_{k=1}^p (1 - \lambda_k^{(p+r-1)})^{q-r+1} }.
\end{equation}
The joint PDF of $ \cup_{s=1}^{q} \{ \lambda_j^{(s)} \}$ is obtained by multiplying
the product of (\ref{4.1}) over $r=1,\dots,q-p$ by (\ref{3.2b}). It is therefore
proportional to
\begin{equation}\label{4.2}
\prod_{r=2}^p \chi(\lambda^{(r-1)} , \lambda^{(r)} )
\prod_{r=p+1}^{q}  \chi(\lambda^{(r-1)} , \lambda^{(r)} \cup \{0\} )
\prod_{i < j}^p (\lambda_i^{(q)} - \lambda_j^{(q)}) .
\end{equation}

The final step is to inductively generate $\{\lambda_i^{(q+r)}\}_{i=1,\dots,p-r}$
$(r=1,\dots,p-1)$ as the eigenvalue different from 0 of
$$
M_{q+r} = \Pi A_{q+r} \Pi
$$
where
$$
A_{q+r} = {\rm diag} \Big ( \lambda_1^{(q+r-1)},\dots, \lambda_p^{(q+r-1)} \Big ).
$$
According to (\ref{2.1}), the PDF of $\{\lambda_j^{(q+r)}\}_{j=1,\dots,p-r}$ for given
$\{\lambda_j^{(q+r-1)}\}_{j=1,\dots,p-r+1}$ is proportional to
\begin{equation}
\chi(\lambda^{(q+r-1)} , \lambda^{(q+r)} \cup \{0,1\} )
{ \prod_{i < j}^{p-r} (\lambda_i^{(q+r)} - \lambda_j^{(q+r)} )
\prod_{k=1}^{p-r}  (1 - \lambda_k^{(q+r)})^{p-q-r} \over
\prod_{i < j}^{p-r-1} (\lambda_i^{(q+r-1)} - \lambda_j^{(q+r-1)} )
\prod_{k=1}^{p-r-1} (1 - \lambda_k^{(q+r-1)})^{q-r+1} }.
\end{equation}
Forming the product over $r=1,\dots,p-1$ and multiplying by the conditional PDF
(\ref{4.2}) gives that the joint PDF of $\cup_{s=1}^{q+p-1} \{ \lambda_j^{(s)} \}$
is proportional to
\clabel{eq:3}\begin{equation}
\label{eq:3}
\prod_{r=2}^p \chi(\lambda^{(r-1)} , \lambda^{(r)})
\prod_{r=p+1}^q \chi(\lambda^{(r-1)} , \lambda^{(r)} \cup \{0\})
\prod_{r=q+1}^{q+p-1} \chi(\lambda^{(r-1)}, \lambda^{(r)} \cup \{0,1\})
\end{equation}
and thus to the finite bead process (\ref{3.2}).

Graphical output from the implementation of this construction for particular
$p$ and $q$ is given in Figure \ref{fig:bead.16}.

\begin{figure}
\includegraphics{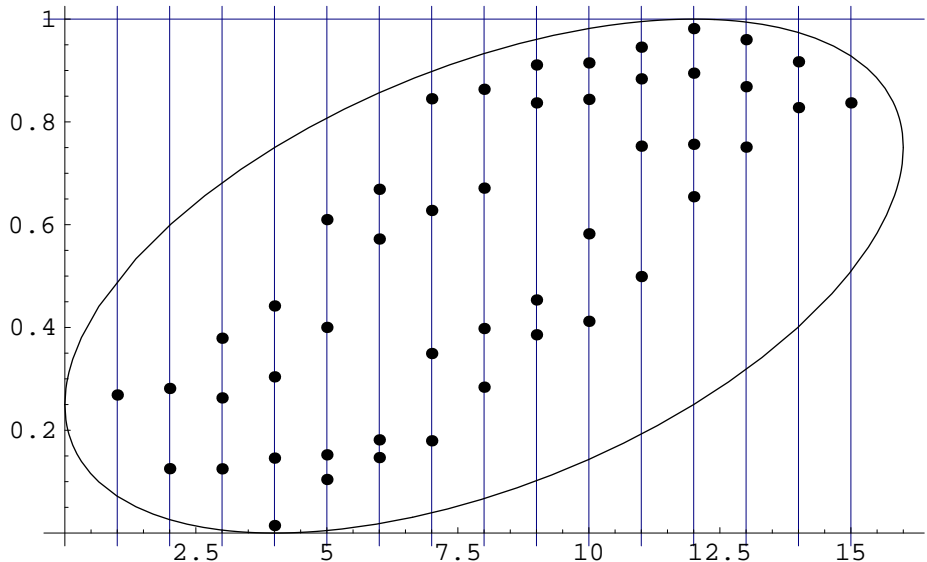}
\caption{A configuration of the finite bead process with
$p=4$, $q=12$, generated from the zeros of a sequence of random
rational functions (\ref{rrf}), corresponding to the eigenvalues of
the sequence of random matrices described in the text. The bounding curves are
the limiting shape as implied by (\ref{be.3}).}
\label{fig:bead.16}
\end{figure}

\section{Correlation functions}
\label{sec:corr_functions}
It was shown in \cite{JN06} that the GUE minor process as specified by the joint PDF
(\ref{1.2}) is determinantal, with the correlations between $n$ eigenvalues chosen
from any of the minors being given by an $n \times n$ determinant. The functional form
of the entries of the determinant is of particular interest. Thus we have from
 \cite{JN06} that
\begin{equation}
\rho_{(n)} (\{ (s_j,y_j) \}_{j=1, \dots, n} )=
\det[K^{\rm GUEm}((s_j, y_j), (s_k, y_k)) ]_{j,k=1,\dots, n}
\end{equation}
with
\clabel{eq:58}\begin{equation}
\label{eq:58}
K^{\rm GUEm}((s, x), (t, y)) =
\begin{cases}
\displaystyle
\frac{e^{-(x^2+y^2)/2} }{\sqrt{\pi}}
\sum_{k=1}^t
\frac{H_{s-k}(x)H_{t-k}(y)}{2^{t-k} (t-k)!},  &s\geq t\\ \\
\displaystyle
-\frac{e^{-(x^2+y^2)/2} }{\sqrt{\pi}}
\sum_{k=-\infty}^0
\frac{H_{s-k}(x)H_{t-k}(y)}{2^{t-k} (t-k)!},  &s\geq t
\end{cases}
\end{equation}
(here $H_j(x)$ denotes the Hermite polynomials of degree $j$). We seek an analogous formula for
the correlations of our finitized bead process. For this we make us of a linear algebra method
developed by Borodin and collaborators \cite{BR06,BFPS06}, although an alternative strategy would
be the method of Nagao and Forrester \cite{NF98a}. Either method can be used to calculate
(\ref{eq:58}), and various generalizations \cite{FN08,FN08a}. 

\subsection*{Formalism}
We take line $j$, which is the segment in the $xy$-plane $(j,t)$, $0 \le t \le 1$, and replace it
with a discretization ${\mathcal M}_j$. In the limit that $|{\mathcal M}_j| \to \infty$ it is required
that the discretization be dense on the segment. On the sets ${\mathcal M}_j \cup \{0\}$ $(j=1,\dots,p-1)$
distribute $j+1$ points $\{x_i^{(j)}\}_{i=1,\dots,j+1}$ with $x_{j+1}^{(j)} = 0$; on the sets
${\mathcal M}_{p-1+j}$ $(j=1,\dots,q-p+1)$
distribute $p$ points $\{x_i^{(p-1+j)}\}_{i=1,\dots,p}$; and on the sets
${\mathcal M}_{q+j} \cup \{1\}$ $(j=1,\dots,p-1)$ distribute $p+1-j$ points 
 $\{x_i^{(q+j)}\}_{i=0,\dots,p-j}$ with $x_{0}^{(q+j)} = 1$.

On the configuration of points ${\mathcal X} := \cup_{j=1}^{p+q-1}\{x_i^{(j)}\}$ define a (possibly signed)
measure
\begin{equation}
\label{2.5y}
{1 \over C} \prod_{t=1}^{q-1}  \det [ W_l(x^{(t)}_i, x^{(t+1)}_j) ]_{i,j=1}^{r(t+1)}
\times
\prod_{t=q}^{p+q-2} \det [ W_l(x^{(t)}_i, x^{(t+1)}_{j-1}) ]_{i,j=1}^{r(t)}
\end{equation}
for some functions $\{W_i : {\mathcal M}_i \times {\mathcal M}_{i+1} \rightarrow \mathbb R \}_{i=1,\dots,p+q-1}$.
With $W_l(y,x) = \phi(y,x) = \chi_{x > y}$, this corresponds to a discretization of the PDF
(\ref{2.5x}) for our finitized bead process.

Next introduce the $|{\mathcal M}_l|\times|{\mathcal M}_{l+1}|$ matrices 
$W_l = [W_l(x_i,y_j)]_{x_i \in {\mathcal M}_l, y_j \in {\mathcal M}_{l+1}}$.
Introduce too the $p \times |{\mathcal M}_{l+1}|$ matrices $E_l$ $(l=0, \dots, p-1)$ with entries in 
row $l+1$ all $1s$ and all other entries $0$, and the ${\mathcal M}_{p+q-l} \times p$ matrices 
$F_l$ $(l=1, \dots, p)$ with entries in column $l$ all $1s$ and all other entries $0$.
In terms of these matrices, with ${\mathcal M} := \{1, \dots, p\} 
\cup {\mathcal M}_1 \cup \dots \cup {\mathcal M}_{p+q-1}$, define the $|{\mathcal M}|\times|{\mathcal M}|$ matrix $L$ by
\[ \left[ \begin{array}{ccccccccccc}
0 & E_0 & E_1 & E_2 &\dots & E_{p-1} & 0 & \dots & \dots & \dots & 0 \\
0 & 0 & -W_1& 0 & \dots & 0 & 0 & \dots & \dots & \dots & 0\\
0 & 0 & 0 & -W_2 & \ddots & \vdots & \vdots & & & &\vdots\\
\vdots & \vdots & \vdots & \ddots & \ddots & 0 & \vdots & & & & \vdots \\
\vdots & \vdots & \vdots & \ddots & 0 & -W_{p-1} & 0& & & & \vdots \\
\vdots & 0 & \dots & \dots & \dots & 0 & -W_{p}& \ddots& & & \vdots \\
0 & \vdots & & & & &  \ddots & \ddots & 0  & & \vdots\\
F_{p-1} & \vdots & & & & & & 0 & -W_{q+1} & \ddots & \vdots\\
\vdots & \vdots& & & & & & & \ddots & \ddots & 0\\
 F_2 & 0 & \dots & \dots & \dots & \dots & \dots & \dots & \dots & 0 & -W_{p+q-2} \\
 F_1 & 0 & \dots & \dots & \dots & \dots & \dots & \dots & \dots & \dots &  0\\
\end{array} \right]\]
for block $0$ matrices of appropriate size. Note that the rows and columns of 
$L$ are labelled by the set ${\mathcal M}$. 

For $Y \subset {\mathcal M}$, 
introduce the notation $L_Y$ to denote the restriction of $L$ to the corresponding rows and columns.
Then, in accordance with the general theory of $L$-ensembles \cite{BR06}, our matrix $L$ has been constructed
so that the measure (\ref{2.5y}) is equal to 
\begin{equation}
\frac{\mbox{det } L_{\{1,\dots, p\}\cup {\mathcal X}}}{\det ({\bf 1}^{*} + L)}
\end {equation}
where ${\bf 1}^{*}$ is the $|{\mathcal M}|\times|{\mathcal M}|$ identity matrix with the 
first $p$ ones set to zero. The significance of this structure is the general fact 
that the correlation function for particles at $Y \subset  {\mathcal M} / \{1,\dots,p\}$ is then given by
\begin{align}
\rho (Y) &= \det K_Y,  &  K &= {\bf 1}^{*} - ({\bf 1}^{*} + L)^{-1} |_{{\mathcal M} / \{1,\dots,p\}}.
\end{align}

Our aim then is to evaluate $K$ in the continuum limit, 
with $W_i(y,x) = \chi_{y<x}$ independent of $i$, and to do this we must 
turn $({\bf 1}^{*} + L)^{-1}$ into a form that can be computed explicitly.
As a start, write $L$ in the structured form
\begin{equation}
L =  \begin{bmatrix}
0 & B \\
C & D-1\\
\end{bmatrix}
\end{equation}
where
\begin{align}
\label{eq:B}
B & = [E_0, E_1, \dots, E_{p-1}, 0, 0, \dots, 0]
\\ 
\label{eq:C} 
C & =  [0, 0, \dots, 0, F_{p-1}, F_{p-2}, \dots, F_1]^T 
\end{align}
We know that \cite{BR06}
\begin{equation}
K = {\bf 1}^{*}  - D^{-1} + D^{-1}CM^{-1}BD^{-1}
\end{equation}
with $M := BD^{-1}C$ and furthermore
\begin{equation}
\label{eq:D} D^{-1} = {\bf 1} + [W_{[i,j)}]_{i,j=1,\dots,p+q-1}
\end{equation}
for
\begin{equation} \label{eq:W}
W_{[i,j)} = \begin{cases}
W_i \dots W_{j-1}, & i  <  j\\
0, & i  \geq  j.
\end{cases}
\end{equation}

From (\ref{eq:D}) we compute that the $m$-th member of the block row vector $BD^{-1}$ is equal to
\begin{equation} \label{eq:BDm} E_{m-1} + \sum_{k=1}^{m-1} E_{k-1}W_{[k,m)}  \end{equation}
for $1\leq m \leq p$ and
\begin{equation} \label{eq:BDm2} \sum_{k=1}^{p} E_{k-1} W_{[k,m)}  \end{equation}
for $p+1 \leq m \leq p+q-1$.
Similarly the $m$-th member of the block column vector $D^{-1}C$ is equal to
\begin{equation}\label{eq:DC} \sum_{k=1}^{p-1} W_{[m, q+k)} F_{p-k} \end{equation}
for $ 1 \leq m \leq q$ and
\begin{equation}\label{eq:DC2} F_{p+q-m} + \sum_{k=1}^{p+q-1-m} W_{[m, m+k)} F_{p+q-m-k} \end{equation}
for $q+1 \leq m \leq p+q-1$.

\subsection*{Explicit formulas}
Our aim is to find square, $p \times p$ matrices $B_0$, $C_0$ such 
that: $BD^{-1} = B_0\Phi$ for some block row vector $\Phi = [\Phi^1,\dots, 
\Phi^{p+q-1}]$; $D^{-1}C = \Psi C_0$ for some block column vector 
$\Psi = [\Psi^1,\dots, \Psi^{p+q-1}]$; and $B_0C_0=M$.  
Finding these square matrices will allow us to compute $K$ explicitly, 
since if the above equalities hold, then 
\begin{equation}\label{K1} K = {\bf 1}^{*}  - D^{-1} + \Psi\Phi \end{equation}
We introduce the notation
\begin{equation} (a*b)(y,x) = \int_0^1 a(y,z)b(z,x)dz \end{equation}
and note that, in the continuum limit 
${\mathcal M}_{i} \rightarrow [0,1]$, $(i=1, \dots, p+q-1)$, and with the
lattice points weighted by their mean spacing, (\ref{eq:W}) becomes
\begin{equation}
W_{[i,j)}(y,x) =  \begin{cases}
(W_i * \dots * W_{j-1})(y,x), & i  <  j\\
0, & i  \geq  j\\
\end{cases}
\end{equation}
Independent of $i$, $W_i(y,x) = \chi_{y<x}$ and so
\begin{equation}\label{eq:Wyx}
W_{[i,j)}(y,x) = \frac{1}{(j-i-1)!}\chi_{y<x}(x-y)^{j-i-1}
\end{equation}
With the entry of $K$ corresponding to line $s$, position $y$ for the row, and line
$t$, position $x$ for the column, denoted $K(s,y;t,x)$ it follows from
(\ref{eq:D}) , (\ref{K1}) and (\ref{eq:Wyx}) that
\begin{equation}\label{K}
K(s,y;t,x) = - \frac{1}{(t-s-1)!} (x-y)^{t-s-1} \chi_{y<x} + \sum_{l=1}^{p} (\Psi^s)_{y,l} (\Phi^t)_{l,x}
\end{equation}
so our task is to calculate the quantities $(\psi^s)_{y,l}$, $(\Phi^t)_{l,x}$.

We begin by looking at the $m$-th member of the block row vector 
$BD^{-1}$ for $m\leq p$ as shown in (\ref{eq:BDm}).  Using (\ref{eq:Wyx}) we see that
\begin{equation}\label{eq:xmp}
\left(E_{m-1} + \sum_{k=1}^{m-1} E_{k-1}W_{[k,m)}\right)_{l,x} = \frac{x^{m-l}}{(m-l)!}
\end{equation}
where the convention $\frac{1}{a!} =0$ for $a \in \mathbb Z_{<0}$ is to be used for $m<l$.
Next define the family of polynomials
\begin{equation} \label{poly}
\tilde{P}_n^{(a,b)}(x) =  P_n^{(a,b)}(1-2x) = \left\{\begin{array}{lc}
\displaystyle \frac{1}{n!}\frac{1}{x^a(1-x)^b}\frac{d^n}{dx^n} \left(x^{n+a}(1-x)^{n+b}\right), & n\geq 0 \\
\displaystyle 0\, , & n < 0 \end{array} \right.
\end{equation}
These polynomials have orthogonality
\begin{equation}\label{orthog}
\int_0^1 x^a(1-x)^b \tilde{P}_j^{(a,b)}(x) \tilde{P}_k^{(a,b)}(x) dx = {\mathcal N}_j^{(a,b)} \delta_{jk}
\end{equation}
where
\begin{equation}
{\mathcal N}_n^{(a,b)} = \frac{1}{2n+a+b+1}\frac{(n+a)!(n+b)!}{n!(n+a+b)!}
\end{equation}
and as such, as indicated in (\ref{poly}),
are the Jacobi polynomials $P_n^{(a,b)}$ modified to be orthogonal on $(0,1)$.

Expressing (\ref{eq:xmp}) as a sum of these polynomials for certain $a=p-m, \,b=q-m$ gives
\begin{eqnarray}\label{5.24}
\frac{x^{m-l}}{(m-l)!} & = & \frac{1}{(m-l)!}\sum_{j=l}^m 
\frac{\tilde{P}_{m-j}^{(p-m,\,q-m)}(x)}{{\mathcal N}_{m-j}^{(p-m, \,q-m)}} 
\int_0^1 t^{m-l}\,t^{p-m} (1-t)^{q-m} \tilde{P}_{m-j}^{(p-m,\,q-m)}(t) dt \nonumber \\
&  = & \sum_{j=1}^m \frac{(-1)^{m+j}}{(m-j)!}\frac{\tilde{P}_{m-j}^{(p-m,\,q-m)}(x)}{{\mathcal N}_{m-j}^{(p-m, \,q-m)}} 
\frac{1}{(j-l)!} \int_0^1t^{p-l} (1-t)^{q-l} dt,
\end{eqnarray}
where the second equality follows using (\ref{poly}) and integrating by parts $m-j$ times.
If we let
\begin{equation}
(B_0)_{l,n} = \frac{(-1)^{p+n}}{{\mathcal N}_{p-n}^{(0,q-p)}(p-n)! (n-l)!} \int_0^1t^{p-l} (1-t)^{q-l} dt
\end{equation}
and
\begin{equation}\label{phi}
(\Phi^m)_{n,x}= \left\{ \begin{array}{lc} \displaystyle (-1)^{p+m}\frac{(p+q-m-n)!}{(q-n)!}\tilde{P}_{m-n}^{(p-m,\, q-m)}(x) & n \leq m\\
\displaystyle 0 & n > m
\end{array} \right.
\end{equation}
then
\begin{equation} \label {eq:m-p} \frac{x^{m-l}}{(m-l)!} = \sum_{n=1}^{p} (B_0)_{l,n}(\Phi^m)_{n,x} \end{equation}
and therefore
\begin{equation} \label{eq:rel}
E_{m-1} + \sum_{k=1}^{m-1} E_{k-1}W_{[k,m)} = B_0\Phi^m
\end{equation}

Consider next the $m=(p+j)$-th member of the block row vector $BD^{-1}$ as shown in (\ref{eq:BDm2}). Using (\ref{eq:Wyx}) we see that
\begin{equation}\label{5.29}
\left(\sum_{k=1}^{p} E_{k-1} W_{[k,p+j)}\right)_{l,x} = \frac{x^{p+j-l}}{(p+j-l)!}
\end{equation}
To start with, we look at the $m=p$ case of (\ref{eq:m-p})
\begin{equation}\label{eq:N-p}
\frac{x^{p-l}}{(p-l)!} = \sum_{n=1}^{p} (B_0)_{l,n} \, \tilde{P}^{(0,q-p)}_{p-n}(x)
\end{equation}
We introduce the operator $J[f(x)] = \int_0^x f(t)dt$ and note that
\begin{equation}
J^j[f(x)] = \frac{1}{(j-1)!} \int_0^x (x-t)^{j-1} f(t) dt
\end{equation}
Applying $J^j$ to both sides of (\ref{eq:N-p}) gives
\begin{equation}\label{eq:Jj}
\frac{x^{p+j-l}}{(p+j-l)!} = \sum_{n=1}^{p} \frac{(B_0)_{l,n}}{(j-1)!} \int_0^x(x-t)^{j-1} \, 
\tilde{P}^{(0,q-p)}_{p-n}(t) dt
\end{equation}
If we let
\begin{equation}\label{phi2}
(\Phi^{p+j})_{n,x} = \frac{1}{(j-1)!} \int_0^x(x-t)^{j-1} \, \tilde{P}^{(0,q-p)}_{p-n}(t) dt
\end{equation}
then it follows from (\ref{5.29}) and (\ref{phi2}) that
\begin{equation}
\sum_{k=1}^{p} E_{k-1} W_{[k,p+j)} = B_0\Phi^{p+j}
\end{equation}
and this coupled with (\ref{eq:rel}) gives us our desired $BD^{-1} = B_0\Phi$.

Now that we have defined $B_0$, we can set about finding what $C_0$ must be, using the equation
\begin{equation}\label{BCM}
B_0 C_0 = M
\end{equation}
 First,
using (\ref{eq:B}), (\ref{eq:C}) and (\ref{eq:D}) we have
\begin{equation}
M = \left[\sum_{j=1}^{p-1}\left(\sum_{k=1}^{p}E_{k-1} W_{[k,q+j)}\right)F_{p-j}\right].
\end{equation}
The formula (\ref{5.29}) for the inner sum implies
\begin{equation}\label{Mp}
(M)_{j,k} = \frac{1}{(p+q+1-j-k)!}
\end{equation}

To proceed further requires the first of
Jacobi polynomial identities 
\begin{eqnarray}
\label{dx} \frac{d}{dx} \left( x^a \tilde{P}_n^{(a,b)}(x)\right) & = & (n+a)x^{a-1}P_n^{(a-1,b+1)}(x) \\
\label{dx2} \frac{d}{dx} \left( (1-x)^b \tilde{P}_n^{(a,b)}(x)\right) & = & -(n+b)(1-x)^{b-1}P_n^{(a+1,b-1)}(x),
\end{eqnarray}
which can be verified from (\ref{poly}) (the second will be of use subsequently).
Setting $j = q+1-m, l=n$ and $x=1$ in (\ref{eq:Jj}) gives
\begin{equation}\label{5.39}
\frac{1}{(p+q+1-n-m)!}  =  
\sum_{k=1}^{p} (B_0)_{n,k} \int_0^1\frac{(1-t)^{q-m}}{(q-m)!} \, \tilde{P}^{(0,q-p)}_{p-k}(t) dt 
\end{equation}
But it follows from (\ref{dx}) that
\begin{equation}
\tilde{P}^{(0,a)}_n(x) = \frac{n!}{(n+a)!} \frac{d^a}{dx^a}(x^a
\tilde{P}_n^{(a,0)}(x)), \qquad a \in \mathbb Z_{\ge 0},
\end{equation}
so the integral in (\ref{5.39}) can be reduced by integration by parts, giving 
\begin{equation}
\frac{1}{(p+q+1-n-m)!}     
 =  \sum_{k=1}^{p} \frac{(B_0)_{n,k}}{(k-n)!} \int_0^1 \frac{t^{q-k}(1-t)^{p-m}}{(q-k)!} dt
\end{equation}
Recalling (\ref{BCM}) and (\ref{Mp}) we thus have
\begin{equation}
(C_0)_{k,m} = \frac{1}{(k-m)!(q-k)!} \int_0^1 t^{q-k}(1-t)^{p-m} dt\
\end{equation}

We now look at the $m=(q+j)$-th member of the block column vector $D^{-1}C$ as shown in (\ref{eq:DC2}).  We see that
\begin{equation} \label{yjp}
\left(F_{p+1-j} + \sum_{k=1}^{p-1-j} W_{[q+j, q+j+k)} F_{p-j-k}\right)_{y,n} = \frac{(1-y)^{p+1-j-n}}{(p+1+j-n)!}
\end{equation}
Expressing this as a sum of the polynomials from (\ref{poly}) for certain $a=j+q-p,b=j$ gives
\begin{eqnarray} \label{yjp1}
\frac{(1-y)^{p-j-n}}{(p-j-n)!} & = & \frac{1}{(p-j-n)!}\sum_{k=n}^{p-j} 
\frac{\tilde{P}^{(j+q-p, j)}_{p-j-k}(y)}
{{\mathcal N}^{(j+q-p, j)}_{p-j-k}}\nonumber \\
& & \times \int_0^1 (1-t)^{p-j-n}\, t^{j+q-p} (1-t)^{j} \tilde{P}^{(j+q-p, j)}_{p-j-k}(t) dt \nonumber \\
\label{Njq}& = & \sum_{k=1}^{p-j} (C_0)_{k,n} \frac{(q-k)!}{(p-j-k)!}\frac{\tilde{P}^{(j+q-p, j)}_{p-j-k}(y)}
{{\mathcal N}^{(j+q-p, j)}_{p-j-k}}
\end{eqnarray}
We remark that this equation can alternatively be derived from (\ref{5.24}) by writing $x=1-y$, $m=p+1-j$, $l=n$,
and noting the symmetry of the Jacobi polynomials $\tilde{P}_n^{(a,b)}(1-x) = (-1)^n \tilde{P}_n^{(b,a)}(x)$.
It follows from (\ref{yjp}) and (\ref{yjp1}) that by setting
\begin{equation}\label {psi}
(\Psi^{q+j})_{y,k} = \left\{ \begin{array}{lr} \displaystyle \frac{(q-k)!}{(p-j-k)!}
\frac{\tilde{P}^{(j+q-p, j)}_{p-j-k}(x)}
{{\mathcal N}^{(j+p-q, j)}_{p-j-k}} & k \leq p-j \\
 \displaystyle 0 & k> p-j \end{array} \right.
\end{equation}
we obtain
\begin{equation}\label{rel}
F_{p-j} + \sum_{k=1}^{p-1-j} W_{[q+j, q+j+k)} F_{p-j-k} = \Psi^{q+j} C_0
\end{equation}

Finally, we look at the $m$-th block of the column vector $D^{-1}C$ as shown in (\ref{eq:DC}). We see that
\begin{equation}
\left(\sum_{k=1}^{p-1} W_{[m, q+k)} F_{p-k}\right)_{y,n} = \frac{(1-y)^{p+q-m-n}}{(p+q-m-n)!}
\end{equation}
With $m=q$ this corresponds to  
$ j=0$ case in (\ref{Njq}), so we have
\begin{equation}
\frac{(1-y)^{p-n}}{(p-n)!} = \sum_{k=1}^{p} (C_0)_{k,n} \frac{(q-k)!}{(p-k)!}
\frac{\tilde{P}^{(q-p, 0)}_{p-k}(y)}{{\mathcal N}^{(q-p, 0)}_{p-k}}
\end{equation}
After making a substitution $1-y = w$ and applying $J^j$, $j=q-m$,  we end up with
\begin{eqnarray}
\frac{(1-y)^{p+q-m-n}}{(p+q-m-n)!} & = & \sum_{l=1}^{p} \frac{(C_0)_{l,n}}{{\mathcal N}^{(q-p, 0)}_{p-l}} 
\frac{(q-l)!}{(p-l)!}
\int_y^1\frac{(z-y)^{q-1-m}}{(q-1-m)!} \tilde{P}_{p-l}^{(q-p,0)}(z) dz
\end{eqnarray}
Setting
\begin{equation}\label{psi2}
(\Psi^m)_{y,l} = \frac{1}{{\mathcal N}^{(q-p, 0)}_{p-l}}\frac{(q-l)!}{(p-l)!} 
\int_y^1\frac{(z-y)^{q-m}}{(q-m)!} \tilde{P}_{p-l}^{(q-p,0)}(z) dz
\end{equation}
gives us
\begin{equation}
\sum_{k=1}^{p-1} W_{[m, q+k)} F_{p-k} = \Psi^mC_0
\end{equation}
and this, along with (\ref{rel}) gives us our desired $D^{-1}C = \Psi C_0$.

The quantities  $(\Psi^s)_{y,l}$ and $(\Phi^t)_{l,x}$ in (\ref{K}) have now been specified in
(\ref{phi}), (\ref{phi2}), (\ref{psi}) and (\ref{psi2}), allowing us (after a little bit more
calculation)  to write down the
explicit form of the entries for the correlation function in terms of Jacobi polynomials.

\begin{prop}
\label{thm:jacobi_kernel}
Let $(t,x)$ refer to the position $x$ on line $t$ of the bead process.  
For the configuration $Y = \bigcup_{i=1}^n \{(t_i,x_i)\}$ the correlation function is given by
\begin{equation}\label{rK}
\rho(Y) = \det [K(t_i, x_i; t_j, x_j)]_{i,j=1,\dots, n}
\end{equation}
where
\begin{equation}\label{final}
K(s,y;t,x) = \left\{ \begin{array}{rc} \displaystyle a_s(y)b_t(x) \sum_{l=1}^{\alpha(s,t)} 
\frac{{\mathcal C}_l^{(s)}}{{\mathcal C}_l^{(t)}}\frac{Q_l^{(s)}(y) Q_l^{(t)}(x)}{{\mathcal N}_l^{(t)}} , & s \geq t\\
\displaystyle -a_s(y)b_t(x) \sum_{l=-\infty}^{0} \frac{{\mathcal C}_l^{(s)}}{{\mathcal C}_l^{(t)}}\frac{Q_l^{(s)}(y) 
Q_l^{(t)}(x)}{{\mathcal N}_l^{(t)}},  & s< t \end{array} \right.
\end{equation}
Here
\begin{equation}
a_s(y)  =  \left\{\begin{array}{lrcccl}
(-y)^{p-s} (1-y)^{q-s} &  0  <  s  \leq  p\\
(1-y)^{q-s} & p  < s \leq  q\\
1 & q  <  s  \leq  p+q \end{array}\right.
\end{equation}
\begin{equation}
b_t(x)  =  \left\{\begin{array}{lrcccl}
(-1)^{p-t} & 0  <  t  \leq  p\\
x^{t-p} & p  <  t \leq  q\\
x^{t-p} (1-x)^{t-q} &  q  <  t  \leq  p+q-1 \end{array}\right.
\end{equation}
\begin{equation}
{\mathcal C}_l^{(t)}  =  \left\{\begin{array}{lrcccl}
\displaystyle \frac{(t-l)!}{(p-l)!} & 0  <  t  \leq  p\\
\displaystyle \frac{(q-l)!}{(p+q-t-l)!}  & p  <  t \leq  q\\
\displaystyle \frac{(t-l)!}{(p-l)!} & q  <  t  \leq  p+q-1 \end{array}\right.
\end{equation}
\begin{equation}\label{5.59}
Q_l^{(t)}(x)  =  \left\{\begin{array}{lrcccl}
\tilde{P}_{t-l}^{(p-t, q-t)}(x) & 0  <  t  \leq p\\
\tilde{P}_{p-l}^{(t-p, q-t)}(x) & p  <  t \leq q\\
\tilde{P}_{p+q-t-l}^{(t-p, t-q)}(x) & q  <  t  \leq  p+q-1 \end{array}\right.
\end{equation}
\begin{equation}
\mathcal{N}_l^{(t)}(x)  =  \left\{\begin{array}{lrcccl}
\mathcal{N}_{t-l}^{(p-t, q-t)} & 0  <  t  \leq  p\\
\mathcal{N}_{p-l}^{(t-p, q-t)} & p  <  t \leq  q\\
\mathcal{N}_{p+q-t-l}^{(t-p, t-q)} & q  <  t  \leq  p+q-1 \end{array}\right.
\end{equation}
and $\alpha(s,t) = \min [p+q-s, t, p]$.
\end{prop}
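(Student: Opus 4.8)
The plan is to insert the explicit expressions (\ref{phi}), (\ref{phi2}), (\ref{psi}) and (\ref{psi2}) for $(\Phi^t)_{l,x}$ and $(\Psi^s)_{y,l}$ into the master formula (\ref{K}) and to repackage the resulting products. Consider first the case $s\ge t$. Then $1/(t-s-1)!=0$ by the convention for negative factorials, so the free term in (\ref{K}) drops out and $K(s,y;t,x)=\sum_{l=1}^{p}(\Psi^s)_{y,l}(\Phi^t)_{l,x}$. The expressions (\ref{psi}), (\ref{psi2}) for $\Psi$ are integrals rather than bare polynomials, so the first task is to reduce them: using the Rodrigues representation (\ref{poly}) together with the derivative identities (\ref{dx}), (\ref{dx2}) and integrating by parts the requisite number of times, each $(\Psi^s)_{y,l}$ collapses to $a_s(y)$ times a single Jacobi polynomial $Q_l^{(s)}(y)$ with a factorial prefactor, and similarly one brings $(\Phi^t)_{l,x}$ to $b_t(x)\,Q_l^{(t)}(x)$ times a factorial prefactor. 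One then runs through the three-by-three table of sub-cases given by $s$ and $t$ each lying in one of $0<\cdot\le p$, $p<\cdot\le q$, $q<\cdot\le p+q$, and in each reads off $a_s$, $b_t$, the ratio ${\mathcal C}_l^{(s)}/{\mathcal C}_l^{(t)}$ and the factor $1/{\mathcal N}_l^{(t)}$ so as to match (\ref{5.59}) and its companions. The upper summation limit $\alpha(s,t)=\min[p+q-s,\,t,\,p]$ is forced by the supports of the summands: $(\Phi^t)_{l,x}=0$ for $l>t$, the sum in (\ref{K}) already stops at $l=p$, and $(\Psi^s)_{y,l}=0$ for $l>p+q-s$ when $s>q$ by (\ref{psi}).

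For $s<t$ the free term in (\ref{K}) is present and, by (\ref{eq:Wyx}), equals $W_{[s,t)}(y,x)=(x-y)^{t-s-1}\chi_{y<x}/(t-s-1)!$. The key point --- the exact analogue of the passage between the two branches of (\ref{eq:58}) in the GUE minor case --- is the completeness identity
\begin{equation}\label{completeness}
\sum_{l=-\infty}^{\alpha(s,t)}(\Psi^s)_{y,l}(\Phi^t)_{l,x}=\frac{(x-y)^{t-s-1}}{(t-s-1)!}\,\chi_{y<x},
\end{equation}
where for $l\le 0$ the summand is defined by analytic continuation of (\ref{phi})--(\ref{psi2}); this continuation is harmless, since every factorial occurring stays that of a nonnegative integer and $\tilde P^{(\,\cdot,\cdot)}_{m-l}$ remains a genuine polynomial of degree $m-l$. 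Granting (\ref{completeness}) and subtracting off the terms $l=1,\dots,\alpha(s,t)$, the formula (\ref{K}) becomes $K(s,y;t,x)=-\sum_{l=-\infty}^{0}(\Psi^s)_{y,l}(\Phi^t)_{l,x}$, and the same repackaging of prefactors as above yields the second branch of (\ref{final}). (For $s\ge t$ the right side of (\ref{completeness}) is zero, so the same identity also shows $\sum_{l=1}^{\alpha(s,t)}=-\sum_{l=-\infty}^{0}$; the statement simply records the finite form.)

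It remains to establish (\ref{completeness}). The argument mirrors the Hermite computation underlying (\ref{eq:58}): one expands the left side in the Jacobi basis attached to the target line $t$, uses the orthogonality (\ref{orthog}) with the normalisations ${\mathcal N}^{(a,b)}_n$, and invokes the inter-line recursions satisfied by the $\Phi$'s (respectively the $\Psi$'s) that are already built into (\ref{eq:rel}) and (\ref{rel}); the sum then telescopes to the convolution kernel $W_{[s,t)}$, which by (\ref{eq:Wyx}) is precisely the right side. Equivalently, (\ref{completeness}) can be obtained by a direct generating-function/residue calculation with Jacobi polynomials, transported through the Hahn-to-Jacobi limit of the Hahn-kernel completeness relation of \cite{JN06}. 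This identity is the one genuinely non-routine ingredient: the $s\ge t$ branch is, once the explicit $\Phi$ and $\Psi$ are in hand, the ``little bit more calculation'' flagged before the statement --- namely distributing factorial prefactors among $a_s$, $b_t$, ${\mathcal C}_l$, ${\mathcal N}_l$ and checking the nine cases --- whereas the $s<t$ branch rests entirely on knowing that the analytically continued negative-index functions reassemble $W_{[s,t)}$.
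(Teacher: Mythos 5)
Your proposal follows essentially the same route as the paper: for $s\ge t$ the kernel is read off by substituting the explicit $\Phi$, $\Psi$ into the master formula and redistributing prefactors, while for $s<t$ the free term $W_{[s,t)}(y,x)$ is expanded in the Jacobi basis via the orthogonality (\ref{orthog}) and integration by parts with (\ref{dx}), its positive-index part is identified with $\sum_{l=1}^{p}(\Psi^s)_{y,l}(\Phi^t)_{l,x}$, and the remainder is the negative-index tail --- which is exactly your completeness identity (\ref{completeness}). The paper verifies this identity by the same orthogonality computation you describe (carried out explicitly for the representative case $p\le s<t\le q$), so no further comment is needed.
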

\begin{proof}
The $s \geq t$ case follows directly from (\ref{K}) by inputing the values given in (\ref{phi}), (\ref{phi2}), (\ref{psi}) and (\ref{psi2}), and using (\ref{poly}), (\ref{dx}) and (\ref{dx2}).
For $s<t$ some further calculation is required, as in this case the first term in (\ref{K}) is non-zero.  In this case we express this term as a sum of polynomials in $x$
\begin{equation}
\frac{1}{(t-s-1)!} (x-y)^{t-s-1} \chi_{y<x} = b_t(x)\sum_{l=-\infty}^{\infty} \alpha_l Q_l^{(t)}(x)
\end{equation}
(or similarly in $y$), with the aim to obtain the form in (\ref{final}).  It turns out that this expansion is such that 
\begin{eqnarray}
b_t(x)\sum_{l=-\infty}^{0} \alpha_l Q_l^{(t)}(x) & = & a_s(y)b_t(x) \sum_{l=-\infty}^{0} 
\frac{{\mathcal C}_l^{(s)}}{{\mathcal C}_l^{(t)}}\frac{Q_l^{(s)}(y)Q_l^{(t)}(x)}{{\mathcal N}_l^{(t)}} \\
b_t(x)\sum_{l=1}^{\infty} \alpha_l Q_l^{(t)}(x) & = & \sum_{l=1}^{p} (\Psi^s)_{y,l} (\Phi^t)_{l,x}
\end{eqnarray}

For example, for $p \leq s <  t \leq q$
\begin{equation}
\sum_{l=1}^{p} (\Psi^s)_{y,l} (\Phi^t)_{l,x}  = x^{t-p}(1-y)^{q-s} \sum_{l=1}^{p} \frac{(s-l)!}{(t-l)!}
\frac{\tilde{P}_{p-l}^{(t-p, q-t)}(x) 
\tilde{P}_{p-l}^{(s-p, q-s)}(y)}{{\mathcal N}_{p-l}^{(s-p, q-s)}}
\end{equation}
We expand in $y$
\begin{equation}
\frac{1}{(t-s-1)!} (x-y)^{t-s-1} \chi_{y<x} = (1-y)^{q-s}\sum_{l=0}^\infty \alpha_l \tilde{P}_{l}^{(s-p, q-s)}(y)
\end{equation}
Using the orthogonality (\ref{orthog})
\begin{equation}
{\mathcal N}_{l}^{(s-p, q-s)}\alpha_l = \int_0^x \frac{(x-u)^{t-s-1}}{(t-s-1)!} u^{s-p} 
\tilde{P}_{l}^{(s-p, q-s)}(u) du
\end{equation}
Using (\ref{dx}) and integration by parts $(t-s)$ times gives
\begin{equation}
{\mathcal N}_{l}^{(s-p, q-s)}\alpha_l = \frac{(l+s-p)!}{(l+t-p)!} x^{t-p}\tilde{P}_l^{(t-p, q-t)}(x)
\end{equation}
so
\begin{align}
\frac{1}{(t-s-1)!} (x-y)^{t-s-1} \chi_{y<x} & =  x^{t-p}(1-y)^{q-s}\sum_{l=-\infty}^{p} \frac{(s-l)!}{(t-l)!} 
\frac{\tilde{P}_{p-l}^{(t-p, q-t)}(x)\tilde{P}_{p-l}^{(s-p, q-s)}(y)}{{\mathcal N}_{l}^{(s-p, q-s)}}  \\
&= \sum_{l=1}^{p} (\Psi^s)_{y,l} (\Phi^t)_{l,x} + a_s(y)b_t(x) \sum_{l=-\infty}^{0} 
\frac{{\mathcal C}_l^{(s)}}{{\mathcal C}_l^{(t)}}\frac{Q_l^{(s)}(y)Q_l^{(t)}(x)}{{\mathcal N}_l^{(t)}}
\end{align}
as required.
\end{proof}

\section{Bulk scaling}
In Section 3, a scaled limit of the bead process was considered in which the lines themselves formed
a continuum. We computed the support of the density, and the density profile. In this section we
focus attention on a different scaled limit---referred to as bulk scaling---in which the particles
are on lines a finite distance (as measured by the line number difference) apart, with the interparticle
spacing on the lines scaled to be unity. We require too that the particles be away from the boundary of
support (thus the use of the term bulk scaling), which we do by requiring them to be in the neighbourhood
of the midpoint of the support, $(c_S + d_S)/2$ in the notation of Theorem \ref{thm:b1}.

Explicitly, the location $t$ of the lines will be measured from a reference line of continuum label $S$
(recall (\ref{Sp}), and thus $t \mapsto p S + t$. With $X_S := (c_S + d_S)/2$ denoting the midpoint of the
support on line $S$, and 
\begin{equation}
u_S := \tilde{\rho}_{(1)}(X_S)
\end{equation}
where $ \tilde{\rho}_{(1)}$ is the normalized density
(\ref{be.2}) on line $S$ the location $x$ of the particles are to be chosen so that
$x = X + X_i/(r(pS) u_S)$. (The quantity $r(t)$ denotes the number of particles on line $t$ as specified
by (\ref{3.1})). Our objective in this section is to compute the scaled correlation kernel
\begin{equation}\label{cK}
\bar{K}(s_0,Y;t_0,X) := \lim_{p \to \infty}
\frac{1}{r(pS) u_S} K \Big ( pS + s_0, y;  pS + t_0, x \Big )
\end{equation}
with
\begin{align}\label{6.1a}
x &= X_S + {X \over r(pS) u_S}, &
y &= X_S + {Y \over r(pS) u_S},
\end{align}
and thus according to (\ref{rK}) the  scaled correlation
function
\begin{multline}\label{6.1b}
 \bar{\rho}(t_1,X_1;\dots;t_n,X_n) \\
 = 
\lim_{p \to \infty} \Big ( \frac{1}{r(pS) u_S} \Big )^n
\rho (pS + t_1, X_S + X_1/(r(pS) u_S); \dots; pS + t_n, X_S + X_n/(r(pS) u_S) ).
\end{multline}

The strategy to be adopted is to make use of a known asymptotic expansion for the large $n$
form of $P^{\alpha + an, \beta + bn}_n(x)$ with $x$ such that the leading behaviour is oscillatory
(outside this interval there is exponential decay) \cite{CI91}. Based on experience with similar 
calculations \cite{FNH99, FN08, FN08a} we expect that this will show that to leading order
the sums are Riemann sums, and so turn into integrals in the limit $p \to \infty$.

\begin{prop}\label{chi}
Let $a,b,x$ be given, and define the parameters $\Delta, \rho, \theta, \gamma$ according to
\begin{align}\label{18}
& \Delta = [a(x+1)+b(x-1)]^2-4(a+b+1)(1-x^2) & \nonumber \\
\frac{2e^{i \rho}}{\sqrt{(1+a+b)(1-x^2)}}  & =   
\frac{a(x+1) + b(x-1) + i\sqrt{-\Delta}}{(1+a+b)(1-x^2)}   & -\pi < \rho \leq \pi, \nonumber \\
\sqrt{\frac{2(a+1)}{(1-x)(1+a+b)}}e^{i \theta}   & =   \frac{(a+b+2)x-(3a+b+2)-i\sqrt{-\Delta}}{2(x-1)(1+a+b)} 
& -\pi < \theta \leq \pi \nonumber \\
\sqrt{\frac{2(b+1)}{(1+x)(1+a+b)}}e^{i \gamma} & =   
\frac{(a+b+2)x+(a+3b+2)-i\sqrt{-\Delta}}{2(x+1)(1+a+b)}& -\pi < \gamma \leq \pi
\end{align}
For $\Delta < 0$ we have the large $n$ asymptotic expansion
\begin{multline}\label{21}
P_n^{(\alpha+an, \beta + bn)}(x) \sim  \left(\frac{4}{\pi n \sqrt{-\Delta}}\right)^{\frac{1}{2}}\left[\frac{2(a+1)}{(1-x)(1+a+b)}\right]^{\frac{n}{2}(a+1)+\frac{\alpha}{2}+\frac{1}{4}}  \\
\begin{aligned}
&\times \left[\frac{2(b+1)}{(1+x)(1+a+b)}
\right]^{\frac{n}{2}(b+1)+\frac{\beta}{2}+\frac{1}{4}} \left[\frac{(1-x^2)(a+b+1)}{4}\right]^{\frac{n}{2}+\frac{1}{4}}  \\
&\times \cos{\left([n(a+1)+\alpha + \frac{1}{2}]\theta+ 
[n(b+1)+\beta + \frac{1}{2}]\gamma-(n+\frac{1}{2})\rho + \frac{\pi}{4}\right)}
\left( 1 + O\left ( {1 \over n} \right ) \right )
\end{aligned}
\end{multline}
valid for general $\alpha, \beta \in \mathbb R$ for $a,b \ge 0$. As noted in
\cite{Co05}, results from \cite{GS91,BG99} imply that the $O(1/n)$ term holds uniformly
in the parameters.
\end{prop}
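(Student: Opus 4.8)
The plan is to obtain (\ref{21}) by a classical steepest-descent analysis of a contour-integral representation of the Jacobi polynomials, in the spirit of Chen and Ismail \cite{CI91}, with the corrections to branch conventions and constants collected in the Appendix. Starting from the Rodrigues formula and Cauchy's formula for the $n$th derivative, and writing $A=\alpha+an$, $B=\beta+bn$, one has
\[
P_n^{(A,B)}(x) = \frac{(-1)^n}{2^{n}}\,(1-x)^{-A}(1+x)^{-B}\,\frac{1}{2\pi i}\oint g(z)\,e^{n\phi(z)}\,dz ,
\]
where the amplitude is $g(z)=(1-z)^{\alpha}(1+z)^{\beta}(z-x)^{-1}$, the phase is
\[
\phi(z)=(1+a)\log(1-z)+(1+b)\log(1+z)-\log(z-x),
\]
and the contour winds once around $z=x$ while staying clear of the branch points $z=\pm1$.

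First I would locate the saddle points. Clearing denominators in $\phi'(z)=0$ yields the quadratic
\[
(1+a+b)z^{2}-\bigl[(2+a+b)x+(b-a)\bigr]z+\bigl[(b-a)x+1\bigr]=0 ,
\]
whose discriminant is exactly $\Delta$ as defined in (\ref{18}). Hence for $\Delta<0$ (and $a,b\ge0$ with $x$ in the oscillatory window) there is a conjugate pair of simple saddle points $z_{\pm}$ with $z_{-}=\overline{z_{+}}$, neither of which coincides with $x$ or with $\pm1$. The next step is topological: examine the level curve $\Real\,\phi(z)=\Real\,\phi(z_{+})$ and check that the encircling contour can be deformed onto the union of the two steepest-descent arcs through $z_{+}$ and $z_{-}$, without crossing the cuts and without acquiring a residue at $z=x$ beyond what the deformation already records. (The confluent case $\Delta=0$, where the saddles coalesce and Airy-type behaviour sets in, is the boundary of this regime and is excluded.)

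On each arc the Laplace estimate gives a contribution $\tfrac{1}{2\pi i}\,g(z_{\pm})\,e^{n\phi(z_{\pm})}\sqrt{2\pi/(-n\,\phi''(z_{\pm}))}\,\bigl(1+O(1/n)\bigr)$, the square-root branch being fixed by the steepest-descent direction, which is the source of the $\pi/4$ inside the cosine. Since $z_{-}=\overline{z_{+}}$ and $g,\phi$ are real-analytic, the two contributions are complex conjugates, so their sum is twice a real part, i.e. a cosine. It then remains to put $g(z_{+})e^{n\phi(z_{+})}$ and $\phi''(z_{+})$ into polar form: writing $1-z_{+}$, $1+z_{+}$, $z_{+}-x$ as modulus times phase, using Vieta's relations for the quadratic above to re-express the moduli $|1-z_{+}|,|1+z_{+}|,|z_{+}-x|$ in terms of $x$ (one finds $|z_{+}-x|^{2}=(1-x^{2})/(1+a+b)$, and similar expressions for the other two), and collecting the exponents contributed by $\phi$ (the powers $\tfrac n2(a+1)+\tfrac\alpha2$, $\tfrac n2(b+1)+\tfrac\beta2$, $\tfrac n2$), by $g$, by the Rodrigues prefactor $(1-x)^{-A}(1+x)^{-B}$, and by $\sqrt{\phi''(z_{+})}$ (the extra $\tfrac14$'s). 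This should reproduce the bracketed real factors of (\ref{21}) exactly, while the three phases accumulate to $\theta$, $\gamma$, $\rho$. Matching the resulting moduli and arguments against the explicit algebraic formulas in (\ref{18}), with the principal-branch normalizations $-\pi<\rho,\theta,\gamma\le\pi$ enforced, is precisely where \cite{CI91} has the small slips that the Appendix repairs; and the uniformity of the $O(1/n)$ term in $\alpha,\beta$ (and in $a,b$) is not obtained from the bare saddle-point estimate but imported from the uniform bounds of \cite{GS91,BG99}, as noted in \cite{Co05}.

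I expect the analytically substantive steps — the conjugate pair of saddles when $\Delta<0$, the admissibility of the contour deformation, and the applicability of the steepest-descent lemma — to be comparatively routine once $\Delta<0$ and $a,b\ge0$. The main obstacle, and the place where mistakes are easy to make, is the final identification: tracking all the branch choices together with the half-integer shifts so that the polar decomposition of the saddle-point value lands on (\ref{18})--(\ref{21}) on the nose, with every $\tfrac14$ and the $\pi/4$ in the right place.
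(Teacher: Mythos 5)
Your proposal is sound but follows a genuinely different route from the paper. The paper (following Chen and Ismail, whose method is Darboux's, not steepest descent --- your attribution is slightly off) works with the implicit generating function (\ref{correlation})--(\ref{xi}), locates its algebraic branch-point singularities $t_\pm$ in the $t$-plane, builds the comparison function (\ref{gt}), and reads off the coefficient of $t^n$ via (\ref{gt1}); the whole content of the Appendix is the careful evaluation of $B_\pm$ and $t_\pm$ in the polar forms that produce $\theta,\gamma,\rho$ and the corrected powers of $-\Delta$. You instead apply steepest descent to the Cauchy-integral form of the Rodrigues formula. Your key algebraic claims check out: the saddle-point equation $\phi'(z)=0$ does reduce to $(1+a+b)z^2-[(2+a+b)x+(b-a)]z+[(b-a)x+1]=0$, its discriminant is exactly the $\Delta$ of (\ref{18}), and Vieta's formulas give $(z_+-x)(z_--x)=(1-x^2)/(1+a+b)$ as you assert; the conjugate-saddle structure then correctly yields a cosine with the $\pi/4$ phase. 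What your approach buys is independence from the generating function (a nontrivial input in its own right) at the cost of having to justify the contour deformation onto the steepest-descent arcs; what the paper's approach buys is that the hard analysis is already done in \cite{CI91} and only the constants need repair. Note, though, that the step you defer --- putting $1-z_\pm$, $1+z_\pm$, $z_\pm-x$ into polar form and matching every modulus, phase, and quarter-power against (\ref{18})--(\ref{21}) --- is precisely the computation that constitutes the paper's entire proof (the passage from (\ref{tpm}) and (\ref{fu}) to (\ref{21})), so your sketch is not yet a complete substitute for it; and in carrying it out you would also need to track the $(-1)^n 2^{-n}$ Rodrigues prefactor and the branch choices of $(1\mp z)^{n(1+a)+\alpha}$ at the non-real saddles, which is where the sign and factor-of-two errors in \cite{CI91} originated. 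Both arguments, like the paper's, must import the uniformity of the $O(1/n)$ term from \cite{GS91,BG99}.
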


Actually (\ref{21}) differs from the form reported in \cite{CI91}, with our $\sqrt{-\Delta}$
in the denominator of the first term on the RHS, whereas it is in the numerator of the
corresponding term in \cite{CI91}, and furthermore some signs and factors of 2 in the
cosine are in disagreement. One check is to exhibit the symmetry of the Jacobi polynomials
\begin{equation}\label{Pcd}
P_n^{(c,d)}(-x) = (-1)^n P_n^{(d,c)}(x)
\end{equation}
For this we examine the effect on the parameters (\ref{18}) under the mappings
\begin{align}\label{Pcd1}
x &\mapsto -x, & a &\mapsto b, & b &\mapsto a
\intertext{
We see that $\Delta$ is unchanged, while}
\label{Pcd2}
\rho &\mapsto \pi - \rho, & \theta &\mapsto - \gamma, & \gamma &\mapsto - \theta.
\end{align}
Making the substitutions (\ref{Pcd1}), (\ref{Pcd2}) in (\ref{21}), along with
$\alpha \mapsto \beta$, $\beta \mapsto \alpha$ we see that indeed the RHS is consistent
with (\ref{Pcd}). Another check is to specialize to the case $a=b=0$. Then with
$x = \cos \phi$, $0 \le \phi \le \pi$ we can check from (\ref{18}) that
$\sqrt{-\Delta} = 2 \sin \phi$, $\rho = \pi/2$, $\theta = \pi/2 - \phi/2$,
$\gamma = - \phi/2$ and so
$$
P_n^{(\alpha,\beta)}(\cos \phi) \sim \Big ( {1 \over \pi n} \Big )^{1/2}
{1 \over (\sin \phi/2)^{\alpha + 1/2} (\cos \phi/2)^{\beta + 1/2} }
\cos \Big ( (n + (\alpha + \beta + 1)/2) \theta - (\alpha + 1/2) \pi/2 \Big )
$$
which agrees with the result in Szeg\"o's book \cite{Sz75}. We give our working in
the Appendix.

According to (\ref{final}) and (\ref{5.59}) the particular Jacobi polynomials appearing in the
summation specifying $K$ in (\ref{cK}) depends on the range of values of the continuum line
label $S$. Let us suppose that $1 \le S \le k + 1$, and so the number of particles on
each line is $p$. In this case, and with $s_0 \ge t_0$
\begin{equation}\label{Kps}
K(pS + s_0,y; pS + t_0,x) =
\sum_{n=0}^{p-1} c_n(x,y) \tilde{P}_n^{(an+t_0, bn-t_0)}(x)
\tilde{P}_n^{(an+s_0, bn-s_0)}(y)
\end{equation}
where
\begin{equation}
c_n(x,y) = (1 - y)^{bn-s_0} x^{an+t_0}
{(2n+an+bn+1) (n+an + bn)! n! \over (bn+n - s_0)! (an+n+t_0)!}
\end{equation}
and with $w:= n/p$,
\begin{align}\label{ab}
a &:= {S -1 \over w}, & b &= {1 + k - S \over w}.
\end{align}
The expression in the case $s_0 < t_0$ is the same except that the range of the sum
is now over $1$ to $\infty$ instead of 0 to $p-1$. With $x$ and $y$ given by (\ref{6.1a})
and $r(pS)$ therein equal to $p$ (recall (\ref{3.1})), we want to replace the summand by its
large $n$ asymptotic form. Because of the form of $x$ and $y$ in (\ref{6.1a}), the
parameters $\Delta, \rho, \theta, \gamma$ in Proposition \ref{chi}, as they apply to the
Jacobi polynomials in (\ref{5.59}), all have expansions in inverse powers of $1/p$.

\begin{lemma}
Write
\begin{align}\label{xX}
x &\mapsto 1 - 2x & \text{with }\quad  x &= X_S + {X \over p u_S}.
\end{align}
The quantities (\ref{18}) have the large $p$ expansion
\begin{align}\label{6.11a}
 \Delta &= \Delta_0 + \Delta_1 X/p + {\rm O}(1/p^2) \nonumber \\
 \rho &= \rho_0 + \rho_1 X/p + {\rm O}(1/p^2) \nonumber \\
 \theta &= \theta_0 + \theta_1 X/p +  {\rm O}(1/p^2) \nonumber \\
 \gamma &= \gamma_0 + \gamma_1 X/p +  {\rm O}(1/p^2)
\end{align}
where
\begin{align}\label{6.12}
\Delta_0 = & 4\left(a^2(1-X_S)^2 -X_S(2a(2+b)(1-X_S) +b^2X_S-4b(1-X_S)-4(1-X_S))\right)\nonumber \\
\Delta_1 = & \frac{8}{u_S}\left((4+b^2)X_S -2-a^2(1-X_S)-a(2+b)(1-2X_S)-2b(1-2X_S)\right)\nonumber \\
e^{i \rho_0} = & \frac{2a(1-X_S)-2bX_S+i\sqrt{-\Delta_0}}{ 4\sqrt{(1+a+b)X_S(1-X_S)}}\nonumber \\
\rho_1 = & \frac{1}{\sqrt{-\Delta_0}}\frac{a-(a-b)X_S}{u_SX_S(1-X_S)}\nonumber \\
 e^{i\theta_0} = & \frac{2a+2(a+b+2)X_S+i\sqrt{-\Delta_0}}{4\sqrt{(a+1)(1+a+b)X_S}} \nonumber \\
\theta_1 = & \frac{1}{\sqrt{-\Delta_0}}\frac{a(1-X_S)-(2+b)X_S}{u_SX_S} \nonumber \\
e^{i\gamma_0} = & \frac{ 4\sqrt{(b+1)(1+a+b)(1-X_S)}}{2a+4b+4-2(a+b+2)X_S-i\sqrt{-\Delta_0}} \nonumber \\
\gamma_1 = & \frac{1}{\sqrt{-\Delta_0}}\frac{(2+b)X_S-a(1-X_S)-2}{u_S(1-X_S)}. 
\end{align}
\end{lemma}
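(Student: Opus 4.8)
The plan is to obtain all four expansions purely by substituting the scaled argument into the algebraic definitions (\ref{18}) and then Taylor expanding in the single small parameter $1/p$, with $X$ held fixed; nothing beyond Proposition \ref{chi}, the explicit density (\ref{be.2}), and the support relations (\ref{be.1}) is needed.

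First I would record the effect of the substitution on the Jacobi argument: $x\mapsto 1-2x$ with $x=X_S+X/(pu_S)$ means that the argument $\xi$ entering (\ref{18}) has the expansion $\xi=\xi_0+\xi_1 X/p+O(1/p^2)$ with $\xi_0=1-2X_S$ and $\xi_1=-2/u_S$. Since $\Delta$ in (\ref{18}) is a quadratic polynomial in $\xi$, its expansion is immediate: $\Delta_0=\Delta\big|_{\xi=\xi_0}$ and $\Delta_1=\xi_1\,\frac{d\Delta}{d\xi}\big|_{\xi_0}=-\frac{2}{u_S}\frac{d\Delta}{d\xi}\big|_{\xi_0}$. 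Substituting $\xi_0+1=2(1-X_S)$, $\xi_0-1=-2X_S$, $1-\xi_0^2=4X_S(1-X_S)$ and collecting terms then reproduces the stated $\Delta_0$ and $\Delta_1$ in (\ref{6.12}).

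For $\rho,\theta,\gamma$ I would first solve the defining relations in (\ref{18}) for the exponentials, for instance
\[
e^{i\rho}=\frac{a(\xi+1)+b(\xi-1)+i\sqrt{-\Delta}}{2\sqrt{(1+a+b)(1-\xi^2)}},
\]
and check, from $[a(\xi+1)+b(\xi-1)]^2-\Delta=4(a+b+1)(1-\xi^2)$ (which is just the definition of $\Delta$) and the analogous identities for $\theta,\gamma$, that $|e^{i\rho}|=|e^{i\theta}|=|e^{i\gamma}|=1$ for $\Delta<0$. Hence the phases are real, the branches $-\pi<\rho,\theta,\gamma\le\pi$ are unambiguous, and they depend smoothly on $1/p$ as long as $\Delta_0<0$, which holds in the bulk. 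Evaluating at $\xi=\xi_0$ gives $e^{i\rho_0},e^{i\theta_0},e^{i\gamma_0}$ directly. For the first-order coefficients, differentiate $g(1/p):=e^{i\rho}$ at $0$ by the chain rule --- through $\xi$ (with $d\xi/d(1/p)=-2X/u_S$) and through $\sqrt{-\Delta}$ (with $d\sqrt{-\Delta}/d(1/p)\big|_0=-\Delta_1 X/(2\sqrt{-\Delta_0})$) --- and use $g'(0)/g(0)=i\rho_1 X$, which is automatically purely imaginary since $|g|\equiv 1$; this gives $\rho_1$, and likewise $\theta_1,\gamma_1$. Along the way one simplifies the quotient-rule expressions using the explicit value $u_S=\tilde\rho_{(1)}(X_S)$ from (\ref{be.2}), the fact that $X_S=(c_S+d_S)/2$ is the midpoint of the support so that $\sqrt{(X_S-c_S)(d_S-X_S)}=(d_S-c_S)/2$, and the relations (\ref{be.1}) among $c_S,d_S$ and $a,b$; it is these simplifications that bring the coefficients to the compact forms listed in (\ref{6.12}).

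The calculation is conceptually routine, so the only real obstacle is the algebraic bookkeeping: correctly tracking the chain-rule contribution of $\sqrt{-\Delta}$ (which is responsible for the ubiquitous $1/\sqrt{-\Delta_0}$ prefactors in $\rho_1,\theta_1,\gamma_1$), keeping the square-root signs consistent with the branch conventions in (\ref{18}), and carrying out the nontrivial simplifications using the support relations. I would do the $\rho$ case in full detail and then note that $\theta$ and $\gamma$ are entirely analogous --- indeed they are exchanged, up to sign, by the reflection $x\mapsto -x$, $a\leftrightarrow b$ of (\ref{Pcd1}), (\ref{Pcd2}), which doubles as an internal consistency check on $\theta_0,\theta_1$ against $\gamma_0,\gamma_1$.
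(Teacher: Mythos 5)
Your proposal is correct and follows essentially the same route as the paper's (very terse) proof: substitute the scaled argument, expand $\Delta$ first, read off $\rho_0,\theta_0,\gamma_0$ from the leading order, and obtain $\rho_1,\theta_1,\gamma_1$ by differentiating through both the argument and $\sqrt{-\Delta}$. Your extra observations (the unit-modulus check making the phases real, and logarithmic differentiation giving the purely imaginary first-order correction) are sound elaborations of the "elementary manipulations" the paper leaves implicit.
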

\begin{proof}
 According to (\ref{18}), the expansions of $\rho, \theta, \gamma$ depend on the
expansion $\Delta$, so the latter must be done first. For this, all that is required is to substitute
(\ref{xX}) in its definition and perform elementary manipulations. Substituting the expansion of
$\Delta$ into the definitions of 
$\rho, \theta, \gamma$ the values of $\rho_0, \theta_0, \gamma_0$ can be determined immediately.
Making use of these, the stated values of $\rho_1, \theta_1, \gamma_1$ then follow.
\end{proof}

The use of this result is that it allows the terms in the 
summation~(\ref{Kps}) to be exhibited to have a Riemann sum form. 
The idea is to replace the discrete summation label~$n$ by a continuous 
index $1/w$ specifying the
proportionality of the summation label to the large parameter $p$.

\begin{prop}\label{p6.3}
Let $x,y$ be as in (\ref{6.1a}), and let $w_0 < w \le 1$ correspond 
to $\Delta(w) < 0$. 
Let  $w$ be fixed and $p$, $n$ large.
To leading order in contribution to the summation, we can replace
$$
c_n(x,y) \tilde{P}_n^{(an+t_0, bn - t_0)}(x)  \tilde{P}_n^{(an+s_0, bn - s_0)}(y)
$$
with 
\begin{multline}\label{6.13}
\frac{2(a+b+2)}{\pi \sqrt{-\Delta_0}} n^{s_0-t_0} 
\left(\frac{x}{y}\right)^{\frac{an}{2}}\left(\frac{1-y}{1-x}\right)^{\frac{bn}{2}} \\
 \times  
\Real \left[\exp\left(\frac{iw(X-Y)\sqrt{-\Delta_0}}{4u_SX_S(1-X_S)}\right)
\left(\frac{2a(1-X_S)+2bX_S+i\sqrt{-\Delta_0}}{4X_S(1-X_S)}\right)^{s_0-t_0}\right].
\end{multline}
For $0 < w < w_0$, corresponding to $\Delta > 0$, 
to the same order the summand can be replaced by zero. 
\end{prop}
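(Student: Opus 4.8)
The plan is to start from the exact expression~(\ref{Kps}), isolate the $n$-th summand, and feed the Jacobi polynomials $\tilde P_n^{(an+t_0,bn-t_0)}(x)$ and $\tilde P_n^{(an+s_0,bn-s_0)}(y)$ into the asymptotic formula of Proposition~\ref{chi}. Since $\tilde P_n^{(a,b)}(x)=P_n^{(a,b)}(1-2x)$, the substitution $x\mapsto 1-2x$ of~(\ref{xX}) puts us exactly in the setting of that proposition, with ``$\alpha$'' there equal to $t_0$ (resp.\ $s_0$) and ``$\beta$'' equal to $-t_0$ (resp.\ $-s_0$); note $a,b\ge0$ holds precisely in the range $1\le S\le k+1$ under consideration, and the hypothesis that the $O(1/n)$ error is uniform in the parameters is what lets us sum the estimate afterwards. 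The condition $\Delta(w)<0$ versus $\Delta(w)>0$ is exactly the oscillatory-versus-exponential-decay dichotomy of Proposition~\ref{chi}: for $0<w<w_0$ the polynomials decay exponentially in $n$ (hence in $p$), so that range of the sum contributes nothing to leading order, which disposes of the second assertion immediately.

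For $w_0<w\le1$, the main body of the argument is bookkeeping with the expansions of Lemma~5.1 (the unnumbered \texttt{lemma} just above). First I would substitute the expansions $\Delta=\Delta_0+\Delta_1 X/p+O(1/p^2)$, etc., into the prefactors of~(\ref{21}): the powers of $\frac{2(a+1)}{(1-x)(1+a+b)}$, $\frac{2(b+1)}{(1+x)(1+a+b)}$ and $\frac{(1-x^2)(a+b+1)}{4}$ combine with the explicit factor $c_n(x,y)$. The role of $a_s,b_t$-type weights is played here by the $x^{an+t_0}(1-y)^{bn-s_0}$ factors in $c_n$ together with the exponential-in-$n$ pieces of~(\ref{21}); after cancelling the common $n$-dependent constants (the ratio of factorials in $c_n$ against $(\frac{4}{\pi n\sqrt{-\Delta}})^{1/2}$ from each of the two polynomials) one should be left with the $\frac{2(a+b+2)}{\pi\sqrt{-\Delta_0}}n^{s_0-t_0}$ prefactor and the surviving ratio $(x/y)^{an/2}((1-y)/(1-x))^{bn/2}$ displayed in~(\ref{6.13}). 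The factorial ratio should be handled by Stirling's formula; the powers $\frac n2(a+1)+\frac\alpha2+\frac14$ etc.\ are where the $n^{s_0-t_0}$ and the extra algebraic factor $(\frac{2a(1-X_S)+2bX_S+i\sqrt{-\Delta_0}}{4X_S(1-X_S)})^{s_0-t_0}$ come from, since $\alpha=t_0$ for the $x$-polynomial but $\alpha=s_0$ for the $y$-polynomial and the difference $s_0-t_0$ is what does not cancel.

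The genuinely delicate step is the cosine. Each polynomial contributes a phase $[n(a+1)+\alpha+\tfrac12]\theta+[n(b+1)+\beta+\tfrac12]\gamma-(n+\tfrac12)\rho+\tfrac\pi4$, but evaluated at $x$ and $y$ respectively, i.e.\ with the $\theta_0+\theta_1X/p$ expansion for one and $\theta_0+\theta_1Y/p$ for the other, and with $\alpha=t_0$ versus $s_0$. Writing $\cos A\cos B=\tfrac12[\cos(A-B)+\cos(A+B)]$, the sum $A+B$ has a phase growing linearly in $n$ (of order $p$), so after summing over $n$ it is a highly oscillatory Riemann sum and averages to zero; only the difference $A-B$ survives. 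In $A-B$ the $O(1)$ pieces $n(a+1)\theta_0+\dots$ cancel between the two polynomials up to the $s_0,t_0$ discrepancy, leaving a term proportional to $s_0-t_0$ (which, combined with the algebraic prefactor, reassembles into the power $(s_0-t_0)$ of the complex number above via $e^{i\cdot}$ bookkeeping), plus the $O(1/p)$ pieces $n\theta_1(X-Y)/p+\dots$, and with $n=wp$ these become the finite quantity $w(\theta_1+\gamma_1-\rho_1)(X-Y)$-type combination; using the explicit $\rho_1,\theta_1,\gamma_1$ from~(\ref{6.12}) one checks this collapses to $\frac{w(X-Y)\sqrt{-\Delta_0}}{4u_SX_S(1-X_S)}$. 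Re-expressing the resulting single cosine as $\Real$ of an exponential then yields exactly~(\ref{6.13}). I expect the hard part to be precisely this cancellation of the $O(1)$ part of the phase difference and the verification that the surviving $O(1/p)$ combination of $\rho_1,\theta_1,\gamma_1$ simplifies to the stated form — it requires care because $\rho,\theta,\gamma$ are defined only implicitly through~(\ref{18}) and one must differentiate those relations, but it is the same mechanism used in \cite{FNH99,FN08,FN08a}, so no new idea is needed beyond patient algebra.
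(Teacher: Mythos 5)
Your proposal follows essentially the same route as the paper: Stirling's formula for the factorial ratio in $c_n$, the Chen--Ismail asymptotics of Proposition~\ref{chi} for each Jacobi polynomial (with $\alpha=t_0,\beta=-t_0$ and $\alpha=s_0,\beta=-s_0$ respectively), the product-to-sum identity with the $\cos(A_x+A_y)$ term discarded as rapidly oscillating, the expansion of the phase difference via Lemma~6.2 collapsing to $w(X-Y)\bigl(\theta_1(a+1)+\gamma_1(b+1)-\rho_1\bigr)=w(X-Y)\sqrt{-\Delta_0}/\bigl(4u_SX_S(1-X_S)\bigr)$ together with the $(s_0-t_0)(\theta_0-\gamma_0)$ term reassembled into the complex power, and exponential smallness of the polynomials for $\Delta>0$. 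This matches the paper's proof in both structure and detail.
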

\begin{proof}
 Use of Stirling's formula gives
\begin{equation}\label{cn}
c_n(x,y) 
\sim (1-y)^{bn-s_0}x^{an+t_0} wp(a+b+2)\frac{(n+bn)^{s_0}}{(n+an)^{t_0}}
\frac{(a+b+1)^{n(a+b+1)+\frac{1}{2}}}{(a+1)^{n(a+1)+\frac{1}{2}}(b+1)^{n(b+1)+\frac{1}{2}}}.
\end{equation}
Note that $a$ and $b$ depend on $w$, but not on $n$. 
Furthermore,  we read off from~(\ref{21}) that, 
provided $\Delta < 0$ and thus $w_0 < w \le 1$. 
The assumptions in Proposition~\ref{chi} are therefore satisfied
and we obtain
\begin{multline}\label{6.15}
 \tilde{P}_n^{(an+t_0, bn-t_0)}(x) \tilde{P}_n^{(an+s_0, bn-s_0)}(y) 
 \sim  \frac{4}{\pi n}
\left(\frac{1}{\sqrt{\Delta(x)\Delta(y)}}\right)^{\frac{1}{2}}\\
 \begin{aligned}
&\times \left[\frac{(a+1)}{(1+a+b)}\right]^{n(a+1)+\frac{(s_0+t_0)}{2}+\frac{1}{2}}
 \left[\frac{(b+1)}{(1+a+b)}\right]^{n(b+1)-\frac{(s_0+t_0)}{2}+\frac{1}{2}} 
\\
&\times\left(a+b+1\right)^{n+\frac{1}{2}} \left(xy\right)^{-\frac{n}{2}(a+1)-\frac{1}{4}} \left[(1-x)(1-y)\right]^{-\frac{n}{2}(b+1)-\frac{1}{4}}  \\
 &\times  \left[xy(1-x)(1-y)\right]^{\frac{n}{2}+\frac{1}{4}} \cos{A_x}\cos{A_y}
\end{aligned}
 \end{multline}
where
\begin{eqnarray}\label{6.16}
&&A_x := [n(a+1)+t_0 + \frac{1}{2}]\theta_x+ [n(b+1) -t_0 + 
\frac{1}{2}]\gamma_x -(n+\frac{1}{2})\rho_x + \frac{\pi}{4} \nonumber \\
&&A_y := [n(a+1)+s_0 + \frac{1}{2}]\theta_y+ [n(b+1) -s_0 + 
\frac{1}{2}]\gamma_y -(n+\frac{1}{2})\rho_y + \frac{\pi}{4}.
\end{eqnarray}
We are justified in ignoring the terms O$(1/p)$ in (\ref{21}) due to 
it being a uniform bound. Combining (\ref{cn}) and (\ref{6.15}) shows
\begin{multline}\label{6.17}
  c_nP_n^{(an+t_0, bn-t_0)}(x)P_n^{(an+s_0, bn-s_0)}(y)  \\
 \sim \frac{4(a+b+2)}{\pi \sqrt{-\Delta_0}} n^{s_0-t_0} 
(a+1)^{\frac{s_0-t_0}{2}}(b+1)^{\frac{s_0-t_0}{2}}
\left(\frac{x}{y}\right)^{\frac{an}{2}}
\left(\frac{1-y}{1-x}\right)^{\frac{bn}{2}}  
\cos{(A_x)}\cos{(B_y)}
\end{multline}

Next we make use of the trigonometric identity $\cos{(A)}\cos{(B)} = \frac{1}{2}(\cos{(A+B)}+\cos{(A-B)})$.
The term $\cos{(A+B)}$ oscillates with frequency proportional to $p$ so contributes at a lower order
to the summation than the term $\cos(A-B)$ which has argument of order unity. Now from
(\ref{6.11a}) and (\ref{6.16})
\begin{equation}
\cos(A_x - A_Y) \sim  \cos{\left(w(X-Y)(\theta_1(a+1)+\gamma_1(b+1)-\rho_1) - 
(s_0 -t_0) (\theta_0-\gamma_0)\right)} 
\end{equation}
which using (\ref{6.12}) can be written
\begin{equation}\label{r4}
\cos(A_x - A_Y) \sim  {\rm Re}
\left(\exp\left[\frac{iw(X-Y)\sqrt{-\Delta_0}}{4uX_s(1-X_s)}\right]
e^{i(s_0-t_0)(\theta_0-\gamma_0)}\right)
\end{equation}
Further, we have from (\ref{6.12}) that
\begin{equation}\label{r5}
e^{i(s_0-t_0)(\theta_0-\gamma_0)}  =  
\left(\frac{1}{(a+1)(b+1)(1-X_S)X_S}\right)^{\frac{s_0-t_0}{2}}
\left(\frac{2a(1-X_S)+2bX_S+i\sqrt{-\Delta_0}}{4}\right)^{s_0-t_0} 
\end{equation}
Substituting (\ref{r5}) in (\ref{r4}), multiplying by $1/2$ and substituting in (\ref{6.17})
gives (\ref{6.13}). The summand for $0 < w \le 1$ contributes to a lower order because with
$\Delta > 0$ the Jacobi polynomials are exponentially small \cite{CI91,Iz07}.
\end{proof}

This result in turn allows
the leading large $p$ behaviour of the sum in (\ref{Kps}) and
thus the limit of the scaled correlation kernel (\ref{cK}) to be computed.

\begin{prop}\label{p6p}
Let
$ \bar{K}(s_0,Y;t_0,X)$ be as specified by (\ref{cK}), and let $S$ be such that
$1 \le S \le k+1$ so that $r(pS) = p$. Introduce too the notation
\begin{equation}\label{nu}
\nu := \frac{2+k}{k} \sqrt{ \frac{1+k}{S(2+k-S)}}.
\end{equation}
We have
\begin{equation}\label{KXY}
\bar{K}(s_0,Y;t_0,X) = {\mathcal A}^{X-Y} {\mathcal B}^{s_0 - t_0} K^*(s_0,Y;t_0,X)
\end{equation}
where
\begin{align}\label{AB}
\mathcal{A} &= e^{\pi \nu}, &\mathcal{B}& =  \frac{(2+k)^2kS (2 - k - S)}{4+8k+k^3(1+S)+k^2(5+2S-S^2)}
\end{align}
and
\begin{equation}\label{Kss}
K^*(s_0,Y;t_0,X) = \begin{cases} \displaystyle 
\int_0^1 \Real \left ( e^{\pi i t (X-Y)} (1 + i t \nu)^{s_0 - t_0} \right ) \, dt, & s_0 \ge t_0, \\
 \displaystyle
- \int_1^\infty \Real \left ( e^{\pi i t (X-Y)} (1 + i t \nu)^{s_0 - t_0} \right) \, dt, & s_0 < t_0.
\end{cases}
\end{equation}
\end{prop}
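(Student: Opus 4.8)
The plan is to substitute the uniform large-$p$ asymptotics of Proposition~\ref{p6.3} into~(\ref{Kps}) and read off~(\ref{cK}) as a Riemann integral. Write $w=n/p$; then $a=(S-1)/w$, $b=(1+k-S)/w$, so $an=(S-1)p$ and $bn=(1+k-S)p$ do not depend on $w$. By Proposition~\ref{p6.3} the summand of~(\ref{Kps}) equals~(\ref{6.13}) up to $1+O(1/p)$ uniformly when $\Delta_0(w)<0$ and is of lower order when $\Delta_0(w)>0$; after extracting the trivial factor $p^{\,s_0-t_0}$ hidden in $n^{\,s_0-t_0}=(wp)^{\,s_0-t_0}$, (\ref{6.13}) is a bounded smooth function of $w$. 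That factor $p^{\,s_0-t_0}$ merely conjugates the determinant~(\ref{rK}) by $\mathrm{diag}(p^{\,t_i})$ and changes no correlation function, so I discard it. Then $\frac{1}{pu_S}\sum_n$ becomes $\frac{1}{u_S}$ times a Riemann sum of mesh $1/p$ (the replacement error negligible by the uniform $O(1/p)$ bound of Proposition~\ref{chi}), and $\bar K=\frac{1}{u_S}\int g(w)\,dw$, with $g$ the limit of the de-gauged summand and the integral over the oscillatory set $\{w:\Delta_0(w)<0\}$ intersected with the range of $w$ allowed in~(\ref{Kps}). From $\Delta_0=4[a(1-X_S)-bX_S]^2-16(1+a+b)X_S(1-X_S)$ (which is~(\ref{6.12}) after the substitution $x\mapsto1-2x$ in~(\ref{18})) this set is $(w_0,\infty)$, with $w_0$ the unique positive zero of $\Delta_0$; it restricts to $(w_0,1]$ when $s_0\ge t_0$, whereas for $s_0<t_0$ the degree index inherited from the $l\le0$ sum in~(\ref{final}) is $n=p-l\ge p$, i.e.\ $w\in[1,\infty)$.

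The second step is to change variables $w\mapsto t$ by matching the oscillatory factor of~(\ref{6.13}) to $e^{\pi it(X-Y)}$, i.e.\ to set $\pi t:=\frac{w\sqrt{-\Delta_0(w)}}{4u_SX_S(1-X_S)}$. Since $w^2(-\Delta_0(w))$ is quadratic in $w$, this relation inverts in closed form, $t$ is strictly increasing on $(w_0,\infty)$, and $dw/dt$ is explicit. Two calibrations fix the endpoints: $t(w_0)=0$ is immediate from $\Delta_0(w_0)=0$, and $t(1)=1$, equivalently $-\Delta_0(1)=16\pi^2u_S^2X_S^2(1-X_S)^2$, which I would verify by plugging the explicit midpoint $X_S=\frac{Sk}{(k+2)^2}+\frac{1}{k+2}$ from~(\ref{be.3}) and the midpoint density $u_S=\tilde\rho_{(1)}(X_S)$ from~(\ref{be.2}) into $\Delta_0$; geometrically this says the oscillation frequency of the top-degree polynomial at the midpoint equals $\pi$. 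Hence $w\mapsto t$ carries $(w_0,1]$ onto $(0,1]$ and $[1,\infty)$ onto $[1,\infty)$, reproducing the two integration ranges of~(\ref{Kss}), the overall minus sign in the $s_0<t_0$ case being the one already present in~(\ref{final}).

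The third step is to verify that everything multiplying $e^{\pi it(X-Y)}$ collapses to $\mathcal A^{X-Y}\mathcal B^{\,s_0-t_0}(1+it\nu)^{\,s_0-t_0}\,dt$. Using~(\ref{6.1a}) and the $w$-independence of $an,bn$, the factor $(x/y)^{an/2}((1-y)/(1-x))^{bn/2}$ in~(\ref{6.13}) tends to $\exp\!\big(\frac{X-Y}{2u_S}\big[\frac{S-1}{X_S}+\frac{1+k-S}{1-X_S}\big]\big)$, which the explicit $X_S,u_S,\nu$ reduce to $\mathcal A^{X-Y}$. The de-gauged residue $w^{\,s_0-t_0}$ times $\big(\frac{2a(1-X_S)+2bX_S+i\sqrt{-\Delta_0}}{4X_S(1-X_S)}\big)^{\,s_0-t_0}$ equals $\big(\frac{2(S-1)(1-X_S)+2(1+k-S)X_S+iw\sqrt{-\Delta_0}}{4X_S(1-X_S)}\big)^{\,s_0-t_0}$; the real part of this bracket is the constant $R:=\frac{Sk(k+2-S)}{(k+2)^2X_S(1-X_S)}$ and, by the definition of $t$, its imaginary part is $\pi tu_S$, so the bracket is $R(1+it\nu)$ once one uses the identity $\pi u_S=\nu R$. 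Lastly the scalar $\frac{2(a+b+2)}{\pi\sqrt{-\Delta_0}}$ times $dw/dt$, divided by $u_S$, equals $1$ (a one-line computation using $a+b+2=(k+2w)/w$ and the explicit coefficients of $\Delta_0$), which forces the emergent $R^{\,s_0-t_0}$ to be $\mathcal B^{\,s_0-t_0}$: one checks that $R$ coincides with $\mathcal B$ of~(\ref{AB}). Assembling these pieces over the $t$-interval from the previous step yields~(\ref{KXY})--(\ref{Kss}).

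The routine ingredients are the Riemann-sum and uniformity arguments and the handful of algebraic identities for $X_S,u_S,\nu,R$ quoted above. The main obstacle is the change-of-variables bookkeeping: inverting $t=t(w)$ in closed form, computing $dw/dt$, and checking that the scalar prefactor $2(a+b+2)/(\pi\sqrt{-\Delta_0})$, the power $w^{\,s_0-t_0}$ times the above bracket, and the Jacobian telescope exactly into $\mathcal B^{\,s_0-t_0}(1+it\nu)^{\,s_0-t_0}\,dt$ --- while keeping the two summation ranges and the $s_0<t_0$ sign consistent throughout.
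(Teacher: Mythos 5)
Your proposal follows essentially the same route as the paper: Proposition~\ref{p6.3} turns the sum~(\ref{Kps}) into a Riemann integral over $w=n/p$ supported on $\{\Delta_0(w)<0\}=(w_0,\infty)$, and the substitution $\pi t = w\sqrt{-\Delta_0(w)}/(4\pi)^{-1}\cdot$(normalization), i.e.\ exactly the paper's change of variables $t=w\sqrt{-\Delta_0}/(4\pi X_S(1-X_S)u_S)$, carries $(w_0,1]$ and $[1,\infty)$ onto the two ranges in~(\ref{Kss}); your identities $R=\mathcal B$, $\pi u_S=\nu R$ and the Jacobian cancellation $\frac{2(a+b+2)}{\pi\sqrt{-\Delta_0}}\,dw=u_S\,dt$ are precisely the ``further minor manipulations'' the paper leaves implicit, and your explicit treatment of the discarded $p^{\,s_0-t_0}$ gauge is if anything more careful than the paper's. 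The one discrepancy is in the irrelevant conjugation factor: the exponential gauge actually works out to $e^{\pi(X-Y)/\nu}$ rather than $e^{\pi\nu(X-Y)}$ (so either your reduction to $\mathcal A^{X-Y}$ or the stated $\mathcal A=e^{\pi\nu}$ has $\nu$ inverted), but since $\mathcal A^{X-Y}$ cancels from every determinant this does not affect the result.
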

\begin{proof}
It follows from Proposition \ref{p6.3} that
\begin{equation}\label{34}
\sum_{n=0}^{p-1} c_nP_n^{(an+t_0, bn-t_0)}(x)P_n^{(an+s_0, bn-s_0)}(y) \sim \frac{2p^{s_0-t_0+1}}{\pi}
\left(\frac{x}{y}\right)^{\frac{(S-1)p}{2}}\left(\frac{1-y}{1-x}\right)^{\frac{(1+k-S)p}{2}} {\mathcal I}
\end{equation}
where, with $w_0$ such that $-\Delta_0 = 0$,
$$
{\mathcal I} := \int_{w_0}^1\frac{a+b+2}{\pi\sqrt{-\Delta_0}}{\rm Re}
\left[\exp\left(\frac{iw(X-Y)\sqrt{-\Delta}}{4uX_S(1-X_S)}\right)\left(\frac{2aw(1-X_S)+2bwX_S+
iw\sqrt{-\Delta_0}}{4X_S(1-X_S)}\right)^{s_0-t_0}\right] dw
$$
Recalling the dependence of $a$ and $b$ on $w$ from (\ref{ab}), and making the substitution
$$
t = \frac{w\sqrt{-\Delta_0}}{4\pi X_S(1-X_S)u_S}
$$
gives
\begin{equation}\label{36}
{\mathcal I} = \int_0^1 \frac{\pi u}{2} {\rm Re}\left[\exp(\pi i t 
(X-Y))\left(\frac{(2+k)^2(k^2S+2iGt+k(2S-S^2+iGt))}{4+8k+k^3(1+S)+k^2(5+2S-S^2)}\right)^{s_0-t_0}\right] dt
\end{equation}
where $G = \sqrt{S(2+k-S)(1+k)}$. Substituting (\ref{36}) in (\ref{34}) and recalling
(\ref{Kps}) we obtain, after some further minor manipulations, the result (\ref{KXY}) in the
case $s_0 \ge t_0$.  
As noted below (\ref{ab}), in the case $s_0 < t_0$, instead of (\ref{Kps}) we have
$$
K(pS + s_0,y; pS + t_0,x) =
- \sum_{n=p}^{\infty} c_n(x,y) \tilde{P}_n^{(an+t_0, bn-t_0)}(x)
\tilde{P}_n^{(an+s_0, bn-s_0)}(y)
$$
Thus, up to the minus sign, the asymptotic form is again given by (\ref{34}), but with the terminal
of integration in $\mathcal I$ now from 1 to $\infty$. This gives (\ref{KXY}) for
$s_0 > t_0$.
\end{proof}

\begin{cor}
With the scaled correlation function $\bar{\rho}$ specifed by (\ref{6.1b}), and $K^*$ specified by
(\ref{Kss}) we have
\begin{eqnarray}\label{6.1bz}
&& \bar{\rho}(t_1,X_1;\dots;t_n,X_n) = \det [ K^*(X_i,t_i;X_j,t_j) ]_{i,j=1,\dots,n}.
\end{eqnarray}
\end{cor}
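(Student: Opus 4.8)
The plan is to combine the exact determinantal formula of Proposition~\ref{thm:jacobi_kernel} with the pointwise asymptotics of Proposition~\ref{p6p}, the only genuinely new ingredient being the observation that the prefactors $\mathcal{A}^{X-Y}$ and $\mathcal{B}^{s_0-t_0}$ appearing in (\ref{KXY}) are pure gauge, and hence invisible to the determinant.

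First I would write the scaled correlation function, using (\ref{6.1b}) together with (\ref{rK}), as a determinant for fixed $p$:
\[
\Big(\frac{1}{r(pS)u_S}\Big)^{\!n}\rho\big(pS+t_1,x_1;\dots;pS+t_n,x_n\big)
=\det\Big[\tfrac{1}{r(pS)u_S}\,K\big(pS+t_i,x_i;pS+t_j,x_j\big)\Big]_{i,j=1}^{n},
\]
where $x_i=X_S+X_i/(r(pS)u_S)$ and I have distributed one factor $1/(r(pS)u_S)$ into each of the $n$ rows of the $n\times n$ determinant by multilinearity (i.e.\ $\det(cM)=c^{n}\det M$). Since $n$ is held fixed and an $n\times n$ determinant is a polynomial, hence continuous, function of its entries, the limit $p\to\infty$ may be taken inside the determinant; by the definition (\ref{cK}) each entry converges to $\bar{K}(t_i,X_i;t_j,X_j)$, so
\[
\bar{\rho}(t_1,X_1;\dots;t_n,X_n)=\det\big[\bar{K}(t_i,X_i;t_j,X_j)\big]_{i,j=1}^{n}.
\]
For $1\le S\le k+1$ the required asymptotics are exactly those established in Proposition~\ref{p6p}; the remaining ranges of $S$ are handled identically after substituting the appropriate Jacobi polynomials from (\ref{5.59}).

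Next I would insert (\ref{KXY}): the $(i,j)$ entry of the determinant equals $\mathcal{A}^{X_j-X_i}\mathcal{B}^{t_i-t_j}\,K^*(t_i,X_i;t_j,X_j)$. Rewriting this as $\big(\mathcal{B}^{t_i}/\mathcal{A}^{X_i}\big)\big(\mathcal{A}^{X_j}/\mathcal{B}^{t_j}\big)K^*(t_i,X_i;t_j,X_j)$ and again invoking multilinearity, I factor $\mathcal{B}^{t_i}/\mathcal{A}^{X_i}$ out of row $i$ and $\mathcal{A}^{X_j}/\mathcal{B}^{t_j}$ out of column $j$; the product of all these scalars over $i$ and over $j$ is $\prod_i(\mathcal{B}^{t_i}/\mathcal{A}^{X_i})\prod_j(\mathcal{A}^{X_j}/\mathcal{B}^{t_j})=1$. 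Hence $\det[\bar{K}(t_i,X_i;t_j,X_j)]=\det[K^*(t_i,X_i;t_j,X_j)]$, which, after the harmless relabelling of the arguments of $K^*$ (and using $\det M=\det M^{T}$), is precisely (\ref{6.1bz}).

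The bulk of the work for this corollary has already been carried out in Propositions~\ref{p6.3} and~\ref{p6p}; what remains is essentially bookkeeping, so I do not anticipate any substantial obstacle. The one point meriting care is the interchange of the $p\to\infty$ limit with the determinant: this is legitimate precisely because the number of particles $n$ is fixed, so one deals with a continuous function of finitely many entries, each of which converges by Proposition~\ref{p6p}. If one wished to upgrade this to convergence uniform on compact sets in the $X_i$, one would appeal to the uniformity of the $O(1/n)$ error term in Proposition~\ref{chi}, as already noted there.
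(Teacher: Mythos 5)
Your proof is correct and follows essentially the same route as the paper: apply Proposition~\ref{p6p} entrywise and observe that the conjugating prefactors $\mathcal{A}^{X-Y}\mathcal{B}^{s_0-t_0}$ drop out of the determinant, a cancellation you in fact justify more explicitly than the paper does (via the row/column factorization $\mathcal{A}^{X_j-X_i}\mathcal{B}^{t_i-t_j}=(\mathcal{B}^{t_i}\mathcal{A}^{-X_i})(\mathcal{A}^{X_j}\mathcal{B}^{-t_j})$). The one place you are too quick is the assertion that the remaining ranges of $S$ are ``handled identically'': for $0<S<1$ the kernel involves $\tilde{P}_{an+t_0+n}^{(-an-t_0,\,bn-t_0)}$, whose degree and (negative) first parameter both scale with $n$, so Proposition~\ref{chi} (stated for $a,b\ge 0$) does not apply verbatim and an analogue of Proposition~\ref{p6p} must be re-derived; the paper handles $1+k\le S\le 2+k$ by the reflection symmetry $(X_i,t_i)\mapsto(-X_i,-t_i)$, $S\mapsto 2+k-S$ rather than by direct computation, and only asserts (without details) that the $0<S<1$ asymptotics have been carried out.
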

\begin{proof}
In the region $1 \le S \le k+1$, this is an immediate consequence of Proposition
\ref{p6p} (note that the prefactors ${\mathcal A}^{X-Y} {\mathcal B}^{s_0 - t_0}$ in
(\ref{KXY}) cancel out of the determinant). The correlation function in the
region $1+k \le S < 2+k$ follows
from the form in the region $0 < S < 1$ upon making the mappings
\begin{align*}
(X_i, t_i) &\mapsto (-X_i, - t_i), & S &\mapsto 2 + k - S
\end{align*}
which from (\ref{Kss}) is indeed a symmetry of (\ref{6.1b}). To show that (\ref{6.1b})
is valid for $0 < S < 1$ requires deriving the analogue (\ref{p6p}) in this case.
Recalling \ref{final}) and (\ref{5.59}), and with 
$s_0 \ge t_0$, we must then obtain the large $p$ form of
\begin{equation}\label{Kps1}
K(pS + s_0,y; pS + t_0,x) =
\sum_{n=0}^{p-1} \tilde{c}_n(x,y) \tilde{P}_{an + t_0 + n}^{(-an-t_0, bn-t_0)}(x)
\tilde{P}_{an + t_0 + n}^{(-an-s_0, bn-s_0)}(y).
\end{equation}
We have done this, using the same general strategy as for $1+k \le S < 2+k$,
obtaining a result consistent with (\ref{6.1b}).
\end{proof}

In the Introduction it was remarked that the bead process was introduced by
Boutillier \cite{Bo06} as a continuum limit of a dimer model on the
honeycomb lattice. The corresponding scaled correlation was calculated to be of
the form (\ref{6.1bz}) but with $K^*$ replaced by $J_\gamma$, where
\begin{equation}\label{KssJ}
J_\gamma(s_0,Y;t_0,X) =\begin{cases}
 \displaystyle
{1 \over 2 \pi} \int_{-1} ^1  \Big ( e^{ i t (X-Y)} (\gamma + i t \sqrt{1 - \gamma^2} )^{s_0 - t_0} 
\Big ) \, dt, & s_0 \ge t_0, \\
 \displaystyle - {1 \over 2 \pi}
\int_{\mathbb R \ [-1,1]}  \Big ( e^{ i t (X-Y)} (\gamma + i t  \sqrt{1 - \gamma^2})^{s_0 - t_0} 
\Big ) \, dt, & s_0 < t_0.
\end{cases}
\end{equation}
The parameter $\gamma$, $|\gamma| < 1$, represents an anisotropy in the underlying $abc$-hexagon, with
$\gamma = 0$ corresponding to the symmetrical case.

We observe that the factor $ (\gamma + i t \sqrt{1 - \gamma^2} )^{s_0 - t_0}$ in the integrands
of (\ref{KssJ}) can be replaced by $(1 + i t  \sqrt{1 - \gamma^2}/\gamma)^{s_0 - t_0}$ without
changing the value of the determinant. Changing scale $J_\gamma(s_0,Y;t_0,X) \mapsto
\pi J_\gamma(s_0,\pi Y;t_0, \pi X)$, and comparing the resulting form of (\ref{KssJ}) with (\ref{Kss}) 
shows the two results to be the same, upon the identification
\begin{equation}\label{J1}
\frac{\sqrt{1 - \gamma^2}}{\gamma} =
\frac{2+k}{k} \sqrt{\frac{1+k}{S(2+k - S)}},
\end{equation}
although the quantity on the RHS is always positive. This latter feature is a consequence of the
calculations relating to our finitized bead process being carried out under the assumption that
$q \ge p$. From the symmetry of the hexagon, the case $q < p$ is obtained by simply replacing
$X,Y$ in (\ref{KssJ}) by $-X, -Y$, or equivalently replacing $\nu$ in   (\ref{Kss}) by $-\nu$.
This then allows us to extend (\ref{J1}) to the region $-1 < \gamma < 0$, by replacing the
$\gamma$ in the denominator by $|\gamma|$.

\section*{Acknowledgements}
The work of the first two  authors have been supported by
a Melbourne Postgraduate Research Award and the Australian 
Research Council respectively. EN has been supported by 
the G\"oran Gustafsson Foundation and by grant 
KAW 2005.0098 from the Knut and Alice Wallenberg Foundation. 
PJF thanks C.~Krattenthaler
for the opportunity to participate in the ESI semester on Combinatorics
and Statistical Mechanics during April 2008, where part of this research
was undertaken.


\section*{Appendix}
\renewcommand{\theequation}{A.\arabic{equation}}
\setcounter{equation}{0}
Chen and Ismail \cite{CI91} use the asymptotic method of Darboux (see e.g.~\cite{Sz75})
applied to the generating function
\begin{equation} \label{correlation}
\sum_{n=0}^\infty  P_n^{\alpha+an, \beta + bn}(x) t^n = 
(1+\xi)^{\alpha+1}(1+\eta)^{\beta+1}[1-a\xi-b\eta-(1+a+b)\xi \eta]^{-1} =: f(t),
\end{equation}
where $\xi$ and $\eta$ depend on $x$ and $t$ according to 
\begin{equation} \label{xi}
\xi = \frac{1}{2}(x+1)t(1+\xi)^{1+a}(1+\eta)^{1+b} \, \, \, {\rm and} \, \, \, \eta = \frac{x-1}{x+1}
\xi.
\end{equation}
The basic idea is to identify and analyze the neighbourhood of the $t$-singularities of $f(t)$,
to replace $f(t)$ in (\ref{correlation}) by its leading asymptotic form $g(t)$ in the neighbourhood of
the singularities (referred to as the comparison function), and finally to expand the latter
about the origin to equate coefficients of $t^n$ and so read off the asymptotic form of
$ P_n^{\alpha+an, \beta + bn}(x)$.

It is shown in \cite{CI91} that the $t$-singularities of $f(t)$ occur at
\begin{equation}\label{tpm}
t_\pm  =  \frac{b(x-1)+a(x+1) \pm i \sqrt{- \Delta}}{(1+a+b)(1-x^2)}[1+\xi_\pm]^{-1-a}[1+\eta_\pm]^{-1-b}
\end{equation}
where $\Delta$ is given by (\ref{18}) and
\begin{align}\label{xpm}
\xi_\pm  &=  \frac{b(x-1)+a(x+1) \pm i \sqrt{\Delta}}{2(1+a+b)(1-x)}, &
 \eta_\pm &= \frac{x-1}{x+1}\xi_\pm
\end{align}
The comparison function is computed as
\begin{equation}\label{gt}
g(t) = B_+ (t_+ - t)^{-1/2} + B_- (t_1 - t)^{-1/2}
\end{equation}
where
\begin{equation}\label{Bf}
B_{\pm} = \lim_{t \to t_\pm} (t_{\pm} - t)^{1/2} f(t).
\end{equation}
Since we know from \cite{CI91} that
$$
{d t \over d \xi} \Big |_{\xi = \xi_{\pm}} = 0, \qquad
{d^2 t \over d \xi^2} \Big |_{\xi = \xi_{\pm}} =
\frac{\pm 2 i \sqrt{-\Delta}}{(1+x)^2(1+\xi_\pm)^{2+a}(1+\eta_\pm)^{2+b}} =: 2 A_{\pm},
$$
we calculate from (\ref{Bf}) that
\begin{align}\label{fu}
B_\pm  = & \mp \frac{(1 + \xi_\pm)^{\alpha+1}(1 + \eta_+)^{\beta + 1} (x+1) \sqrt{- A_{\pm}}}{
i \sqrt{ - \Delta}}  \\
& =  e^{\mp \pi i /4} (- \Delta)^{-1/4}(1 + \xi_{\pm})^{\alpha - a/2} (1 + \eta_\pm)^{\beta - b/2}\nonumber
\end{align}
The first of the formulas in (\ref{fu}) is not reported in \cite{CI91}, while the second is their
(2.14), (2.15) but with our $ e^{\mp \pi i /4} (- \Delta)^{-1/4}$ replaced by
$- i (\Delta)^{-1/4}$ and $i (- \sqrt{\Delta})^{-1/2}$ respectively.

The coefficient of $t^n$ in (\ref{gt}), and thus the leading asymptotic form of
$ P_n^{\alpha+an, \beta + bn}(x)$ according to the method of Darboux, is equal to
\begin{equation}\label{gt1}
(-1)^n {-\frac{1}{2} \choose n}[B_+t_+^{-n-\frac{1}{2}}+ B_-t_-^{-n-\frac{1}{2}}]
\end{equation}
To simplify this, we note from (\ref{18}) and (\ref{xpm}) that
\begin{align*}
 1 + \xi_\pm &= \Big ( \frac{2(a+1)}{(1-x)(a+b+1)} \Big )^{1/2} e^{\pm i \theta} \\
1 + \eta_\pm &= \Big ( \frac{2(b+1)}{(1+x)(a+b+1)} \Big )^{1/2} e^{\pm i \gamma} \\
 \frac{2 \xi_\pm}{x+1} &= \frac{2 e^{\pm i \rho}}{\sqrt{(a+b+1)(1-x^2)}}
\end{align*}
These substituted in (\ref{tpm}) and (\ref{fu}) give
\begin{equation*}
\begin{split}
 B_+ t_+^{-n-\frac{1}{2}}+ B_-t_-^{-n-\frac{1}{2}} = 
 &\left(\frac{1}{\sqrt{-\Delta}}\right)^{\frac{1}{2}}
2 \cos h(\theta,\gamma,\rho) 
\left[\frac{2(a+1)}{(1-x)(1+a+b)}
\right]^{\frac{n}{2}(a+1)+\frac{\alpha}{2}+\frac{1}{4}} \\
& \times \left[\frac{2(b+1)}{(1+x)(1+a+b)}\right]^{\frac{n}{2}(b+1)+\frac{\beta}{2}+\frac{1}{4}} 
\left[\frac{(1-x^2)(a+b+1)}{4}\right]^{\frac{n}{2}+\frac{1}{4}} 
\end{split}
\end{equation*}
where
$$
h(\theta, \gamma, \rho) = [n(a+1)+\alpha + \frac{1}{2}]\theta+ [n(b+1)+\beta + \frac{1}{2}]i
\gamma-(n+\frac{1}{2})\rho + \frac{\pi}{4}
$$
This, together with the expansion
$$
(-1)^n{-\frac{1}{2} \choose n} \sim \frac{n^{-\frac{1}{2}}}{\sqrt{\pi}}
$$
substituted in (\ref{gt1}) gives (\ref{21}).

 \bibliography{FFN08}{}
\bibliographystyle{alpha}

\end{document}